\documentclass[11pt,reqno]{amsart}
\usepackage[utf8]{inputenc}
\usepackage{amsopn,amssymb,mathrsfs,xcolor,url,enumitem,accents,marginnote}
\usepackage[abbrev]{amsrefs} 
\usepackage[a4paper,lmargin=3cm,rmargin=3cm,tmargin=4cm,bmargin=4cm,marginparwidth=2.8cm,marginparsep=1mm]{geometry} 
\usepackage[bookmarks=true,hyperindex,pdftex,colorlinks,citecolor=blue]{hyperref} 
\usepackage{cleveref}

\makeatletter
\@namedef{subjclassname@2020}{\textup{2020} Mathematics Subject Classification}
\makeatother

\newtheorem{theorem}{Theorem}[section]

\newtheorem{lemma}[theorem]{Lemma}
\newtheorem{proposition}[theorem]{Proposition}
\newtheorem{problem}[theorem]{Problem}

\theoremstyle{definition}

\newtheorem{remark}[theorem]{Remark}

\newcommand{\cl}[1]{\ensuremath{\overline{{#1}}}}
\newcommand{\da}[2]{\ensuremath{\langle{#1},{#2}\rangle}}
\newcommand{\DEL}[2]{\ensuremath{\mathbf{\Delta}^{#1}_{#2}}}
\newcommand{\ep}{\varepsilon}
\newcommand{\free}[1]{\ensuremath{\mathcal{F}({#1})}}

\newcommand{\IF}{\ensuremath{\mathbf{IF}}}
\newcommand{\iclass}[2]{\ensuremath{\langle{#1}\rangle^{#2}_{\cong}}}
\newcommand{\lint}[4]{\ensuremath{\int_{#1}^{#2}{#3}\:\mathrm{d}{#4}}}
\newcommand{\map}[3]{\ensuremath{{#1}:{#2}\to{#3}}}

\newcommand{\n}[1]{\ensuremath{\left\|{#1}\right\|}}
\newcommand{\ndot}{\ensuremath{\left\|\cdot\right\|}}
\newcommand{\pel}{\ensuremath{\mathbf{P}}}

\newcommand{\pn}[2]{\ensuremath{\left\|{#1}\right\|_{#2}}}

\newcommand{\prsb}{\mathcal{B}}

\newcommand{\Q}{\mathbb{Q}}
\newcommand{\R}{\mathbb{R}}
\newcommand{\restrict}[1]{\ensuremath{\mathord{\upharpoonright}_{#1}}}
\newcommand{\se}{\mathcal{SB}}
\newcommand{\set}[2]{\ensuremath{\left\{{#1}\;:\;\,{#2}\right\}}}
\newcommand{\SIG}[2]{\ensuremath{\mathbf{\Sigma}^{#1}_{#2}}}
\newcommand{\Tr}{\mathbf{Tr}}

\newcommand{\vs}{\mathcal{R}}

\newcommand{\vsd}{\vs_{\mathrm{dis}}}

\newcommand{\WF}{\ensuremath{\mathbf{WF}}}
\newcommand{\Z}{\mathbb{Z}}

\DeclareMathOperator{\aspan}{span}
\DeclareMathOperator{\conv}{conv}
\DeclareMathOperator{\Lip}{Lip}
\DeclareMathOperator{\ran}{ran}

\renewcommand{\geq}{\geqslant}
\renewcommand{\leq}{\leqslant}

\numberwithin{equation}{section}

\allowdisplaybreaks

\renewcommand{\MR}[1]{} 

\begin{document}

\title[The isomorphism class of Pe{\l}czy\'nski's universal basis space]{Lipschitz-free spaces and the isomorphism class of Pe{\l}czy\'nski's universal basis space}

\date{\today}

\author[R. J. Smith]{Richard J. Smith}
\address[R. J. Smith]{School of Mathematics and Statistics, University College Dublin, Belfield, Dublin 4, Ireland}
\email{richard.smith@maths.ucd.ie}

\begin{abstract}
We show that the class of complete separable metric spaces whose Lipschitz-free space is isomorphic to Pe{\l}czy\'nski's universal basis space $\pel$ is $\SIG{1}{1}$-complete.
\end{abstract}
	
\keywords{Lipschitz-free space, descriptive set theory, isomorphism class, Pe{\l}czy\'nski's universal basis space}
\subjclass[2020]{Primary 46B20; Secondary 03E15}
\maketitle

\section{Introduction}\label{sect_intro}

Let $M$ be a metric space with base point $0_M$, i.e.~a pointed metric space. The Lipschitz space $\Lip_0(M)$ is the Banach space of real-valued Lipschitz functions on $M$ that vanish at $0_M$, whose norm assigns to each function its optimal Lipschitz constant $\Lip(f)$. The Lipschitz-free (hereafter free) space $\free{M}$ over $M$ (also called the transportation cost space or the Arens-Eells space and denoted $\text{\AE}(M)$) is the closed linear span in $\Lip_0(M)^*$ of the evaluation functionals $\delta(x)$, $x \in M$, defined by $\da{f}{\delta(x)}=f(x)$, $f\in\Lip_0(M)$. Free spaces enjoy close links with metric geometry and optimal transport theory, and have been a popular topic of study by functional analysts for nearly 25 years. For the fundamental properties of these spaces, the reader is encouraged to consult the standard sources \cites{godefroy:kalton:03,weaver:18}.

We present some of the many intriguing characteristics of these spaces. The first is their universal property, which allows us to linearise certain Lipschitz maps:~if $X$ is a Banach space and $\map{f}{M}{X}$ is a Lipschitz map satisfying $f(0_M)=0$, then there exists a unique linear operator $\map{T}{\free{M}}{X}$ satisfying $f=T\circ \delta$ and $\n{T}=\Lip(f)$ \cite{weaver:18}*{Theorem 3.6}. Second is the fact that a Banach space $X$ has the bounded approximation property (BAP) if and only if $\free{X}$ has the BAP \cite{godefroy:kalton:03}*{Theorem 5.3}. Third is the intriguing fact, which follows in part from the second, that $\pel \cong \free{\pel}$, where $\pel$ denotes Pe{\l}czy\'nski's universal basis space (see \cite{pelczynski:69} and \cite{albiac:kalton:16}*{Section 15.3}) and $\cong$ means linear isomorphism \cite{godefroy:kalton:03}*{p.~139}.

Concerning $\pel$ in particular, it follows from \cite{kaufmann:15}*{Corollary 3.3} and the above that $\pel \cong \free{B_\pel}$ (where $B_\pel$ is the closed unit ball of $\pel$), and 
$\pel \cong \free{K}$ for some compact convex set $K\subseteq \pel$ \cite{garcia-lirola:prochazka:18}*{Theorem 1}. 
This leads one to suspect that the class of separable metric spaces $M$ such that $\pel \cong \free{M}$ could be rather diverse. The main result of this paper shows that the class of such $M$ satisfying $\pel \cong \free{M}$ has maximal complexity, relative to the appropriate descriptive set-theoretic framework.

In general, the classification of separable free spaces up to isomorphism is notoriously difficult. Despite several advances made over the last two decades in particular, still relatively little is known. It follows from the universal property that if $M$ and $N$ are Lipschitz isomorphic then $\free{M} \cong \free{N}$, but the converse fails badly (as seen above) and the answers to many easily stated questions, such as whether $\free{\R^2} \cong \free{\R^3}$, still elude us. As discussed further below, our result provides some mathematical justification for the difficulty of classifying these spaces.



The descriptive set-theoretic framework we use will be the same as the one set up in \cite{smith:26}. Denote by $\vs$ the space of all metrics $d$ on $\omega$. This is a $G_\delta$ in $[0,\infty)^{\omega\times \omega}$ equipped with the topology of pointwise convergence, and is thus Polish. This space was first introduced and studied by Vershik in \cites{vershik:98,vershik:04}. To each $d\in\vs$ we associate the completion $M_d:=\cl{(\omega,d)}$ of $(\omega,d)$, with base point $0\in\omega$, together with the free space $\free{M_d}$. A pointed metric space $M$ is complete separable and infinite if and only if $M \equiv M_d$ for some $d\in\vs$, where $\equiv$ denotes isometry, and where the isometry maps $0_M$ to $0$. Thus every separable infinite-dimensional free space $\free{M}$ can be coded by $\vs$, because we know that $M$ in this case must be separable and infinite, and we can assume that it is also complete as well, thus $\free{M} \equiv \free{M_d}$ for some $d\in\vs$ by the above and the universal property (where in the context of Banach spaces $\equiv$ denotes linear isometry).

We must also consider a corresponding framework for infinite-dimensional separable Banach spaces \cite{cuth:doucha:kurka:22a}. Define the Polish space $\prsb$ of functions $\mu:V\to[0,\infty)$ (where $V$ is the $\Q$-linear vector space of all $\Q$-valued elements of $c_{00}$) which can be extended to a norm on $c_{00}$. To each $\mu \in \prsb$ we associate the completion $X_\mu:=\cl{(V,\mu)}$; similarly to above, a Banach space $X$ is separable and infinite-dimensional if and only if $X\equiv X_\mu$ for some $\mu$. This is an alternative to the Effros-Borel space $\se$ introduced by Bossard, which is the established framework for applications of descriptive set theory to Banach spaces \cites{bossard:02,dodos:10}. There is a Borel isomorphism $\map{\Theta}{\prsb}{\se}$ such that $\Theta(\mu) \equiv X_\mu$ for all $\mu \in \prsb$ \cite{cuth:doucha:kurka:22a}*{Theorem 8}. This means that all existing results that use $\se$ can be imported to $\prsb$. For this reason, and because being Polish confers some benefits, we shall use $\prsb$ throughout this paper, substituting it for $\se$ without comment whenever the latter has been used in the past.

Given a separable infinite-dimensional Banach space $X$, we define the isomorphism classes of $X$ with respect to $\prsb$ and $\vs$:
\[
\iclass{X}{} = \set{\mu \in \prsb}{X_\mu \cong X} \quad\text{and}\quad \iclass{X}{\vs} = \set{d \in \vs}{\free{M_d} \cong X},
\]
where $\cong$ denotes linear isomorphism. The former set is well-known to be $\SIG{1}{1}$. The same holds for the latter because there is a continuous map $\map{\Xi}{\vs}{\prsb}$ such that $\free{M_d} \equiv X_{\Xi(d)}$ for all $d \in \vs$ (see Proposition \ref{pr:reduction} below), whence $\iclass{X}{\vs} = \Xi^{-1}(\iclass{X}{})$. Of course, $\iclass{X}{\vs}$ can be empty because $X$ is not obliged to be isomorphic to any free space. This happens, for example, when $X$ lacks a complemented subspace isomorphic to $\ell_1$ \cite{cuth:doucha:wojtaszczyk:16}*{Theorem 1.1}. Hereafter, whenever we refer to $\iclass{X}{\vs}$ we assume it is non-empty.

The descriptive complexity of $\iclass{X}{}$ has been studied comprehensively by several authors. It follows from \cites{bourgain:81,bourgain:rosenthal:schechtman:81}  that $\iclass{L_p[0,1]}{}$, $p \in (1,\infty)\setminus\{2\}$, is not $\DEL{1}{1}$ (the same holds more generally for $\iclass{L_\Phi[0,1]}{}$, where $L_\Phi[0,1]$ is a reflexive Orlicz space not isomorphic to $L_2[0,1]$ \cite{ghawadrah:16}). The sets $\iclass{\pel}{}$ and $\iclass{\ell_2}{}$ are not $\DEL{1}{1}$ and $\DEL{1}{1}$, respectively \cite{bossard:02} (the latter being a consequence of Kwapie\'n’s theorem \cite{kwapien:72}). It was conjectured \cite{bossard:02} that if $\iclass{X}{}$ is $\DEL{1}{1}$ then $X \cong \ell_2$, however this was shown to be false in \cites{godefroy:17a}; shortly thereafter $\iclass{\ell_p}{}$, $p \in (1,\infty)\setminus\{2\}$, were shown to be $\DEL{1}{1}$ \cites{godefroy:17b}. On the other hand, $\iclass{c_0}{}$ is not $\DEL{1}{1}$ \cite{kurka:19}.

The purpose of this paper is start investigating the corresponding question for $\iclass{X}{\vs}$. This was motivated primarily by the cases $X=\ell_1$ or $X=L_1[0,1]$, which are Banach spaces that play key roles in free space theory. If it turns out that $\iclass{X}{\vs}$ is not $\DEL{1}{1}$ in at least one of these cases, then $\iclass{X}{}$ would also not be $\DEL{1}{1}$ as $\iclass{X}{\vs} = \Xi^{-1}(\iclass{X}{})$, thus solving an open problem (see e.g.~\cite{godefroy:17a}*{Problem 3.2}). While we didn't establish the complexity of $\iclass{X}{\vs}$ in these cases, we did do so when $X=\pel$. This brings us to the main result of the paper.


\begin{theorem}\label{th:pel_complete-analytic}
The set $\iclass{\pel}{\vs}$ is $\SIG{1}{1}$-complete.
\end{theorem}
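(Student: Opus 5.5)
Since $\iclass{\pel}{\vs}=\Xi^{-1}(\iclass{\pel}{})$ with $\Xi$ continuous (Proposition \ref{pr:reduction}), the set is $\SIG{1}{1}$. It therefore suffices to give a Borel reduction of a known $\SIG{1}{1}$-complete set to $\iclass{\pel}{\vs}$. I will use the set of ill-founded trees $\IF \subseteq \Tr$ on $\omega$, and construct a Borel map $T\mapsto d_T$ from $\Tr$ to $\vs$ with
\[
T\in\IF \iff \free{M_{d_T}}\cong\pel .
\]

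The construction follows Bossard's proof that $\iclass{\pel}{}$ is not $\DEL{1}{1}$. Fix a normalised basis of $\pel$ that contains, as a block-type subsequence, every normalised basic sequence up to equivalence. Then attach to each tree $T$ a space $X_T$ as follows.
\begin{itemize}
\item Each node $s\in T$ carries a finite-dimensional piece.
\item Each branch of $T$ carries the space spanned along that branch.
\item $X_T$ is an $\ell_1$-type (or $\ell_2$-type) sum over nodes.
\end{itemize}
The intended dichotomy is this. If $T$ has an infinite branch, then $X_T$ contains a complemented copy of $\pel$, and Pełczyński decomposition gives $X_T\oplus\pel\cong\pel$. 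If $T$ is well-founded, then $X_T\oplus Y$ fails to be universal for bases, by a Bourgain/Szlenk-type ordinal index bounded by the rank of $T$, so it is not isomorphic to $\pel$.

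To pass to free spaces I will work with metric spaces rather than Banach spaces. Let $M_T$ be a pointed metric space formed by gluing at the base point, with large separation (an $\ell_1$-sum of metric spaces), the following pieces:
\begin{itemize}
\item the ball $B_\pel$, or the compact convex $K$ of García-Lirola–Procházka, with $\free{K}\cong\pel$;
\item for each node $s$, the unit ball of the finite-dimensional span $E_s$, so each piece has free space isomorphic to $\free{E_s}$;
\item along branches, the closed unit balls of the partial spans, nested compatibly so that an infinite branch yields a ball $B_{Z}$ of an infinite-dimensional space $Z$ with a basis.
\end{itemize}
Since free spaces of $\ell_1$-gluings are $\ell_1$-sums of free spaces (up to isomorphism, with uniform constants), I get
\[
\free{M_T}\cong\bigl(\textstyle\bigoplus \free{\text{pieces}}\bigr)_{\ell_1}.
\]
When $T\in\IF$, the branch piece is a ball of a space with BAP containing $\pel$ complementably. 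Using $\free{B_X}\cong\free{X}$ (Kaufmann) and $\free{\pel}\cong\pel$, together with $\ell_1$-sums of free spaces over finite-dimensional spaces being complemented in $\pel$-type spaces, a decomposition argument gives $\free{M_T}\cong\pel$. I have to check that $\pel\cong(\bigoplus\pel)_{\ell_1}$, or adjust the gluing to avoid needing this. Borelness of $T\mapsto d_T$ comes from enumerating a countable dense set of rational combinations in each piece, indexed by nodes of $T$, with distances defined by explicit formulas in finitely many coordinates.

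The main obstacle is the well-founded case: showing $\free{M_T}\not\cong\pel$. For this I plan to use an ordinal index that is preserved under isomorphism and bounded on free spaces over the well-founded constructions. The choice is an index of $\ell_1$-sums of free spaces over finite-dimensional and "bounded-rank" pieces, and an index such as Bossard's universality index does not obviously behave well under $\free{\cdot}$. Because of this, I would first try to choose the pieces so that the relevant free spaces are themselves controlled. For example, I would use pieces that are uniformly discrete, or subsets of $\R$, whose free spaces are $\ell_1$ or $L_1$-type and hence cannot carry $\pel$'s universality. The universality then lives in a separate factor that is present only along infinite branches. This reduces the well-founded case to showing that a space of the form $(\bigoplus_{s} W_s)_{\ell_1}$, with $W_s$ the branch-truncated Banach pieces, has bounded universality index and is therefore not isomorphic to $\pel$.
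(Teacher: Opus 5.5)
Your proposal has a genuine gap; in fact, the construction as written cannot be a reduction. You list $B_\pel$ (or $K$) as an unconditional piece of the $\ell_1$-gluing $M_T$, so $\free{M_T}$ contains a complemented copy of $\pel$ for \emph{every} tree. Take your decomposition $\free{M_T}\cong(\bigoplus\free{\text{pieces}})_{\ell_1}$ at face value. For well-founded $T$ the remaining summands are free spaces over balls of finite-dimensional spaces, and these have the MAP. So $\free{M_T}$ has the BAP and is complemented in $\pel$, and Pe{\l}czy\'nski's decomposition method gives $\free{M_T}\cong\pel$ for $T\in\WF$ as well.

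The universal part has to be produced only by infinite branches, and only in the completion, since membership in $\IF$ is not visible in finitely many coordinates. That is exactly what Lemma \ref{lem:combine} provides. It uses the metric $\eta(s,t)=\rho(s,t)+\theta(x_{|s|},x_{|t|})$ on the tree, with $(x_i)$ dense in $\frac12 B_\pel$ and repeating infinitely often. Along any infinite branch, this makes $\frac12 B_\pel$ a $1$-Lipschitz retract of the completion. When $T\in\WF$, the $\rho$-term makes $M_{\Delta(T)}$ complete and discrete. Your last paragraph gestures at this (``a separate factor present only along infinite branches'') but gives no mechanism for it.

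The second gap is the well-founded case itself, which you leave open. You concede that a Bossard/Szlenk-type index does not obviously pass through $\free{\cdot}$, and the final reduction to a bounded universality index for $(\bigoplus W_s)_{\ell_1}$ is unsupported. Moreover, once balls are nested along branches the pieces share sub-balls, so $\free{M_T}$ is no longer an $\ell_1$-sum of the free spaces of the pieces.

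The paper needs no index. For $T\in\WF$ the space $M_{\Delta(T)}$ is complete and discrete, so $\free{M_{\Delta(T)}}$ has the RNP (Aliaga--Gartland--Petitjean--Proch\'azka). By contrast, $\pel$ contains $c_0$ and fails the RNP. The cost moves to the ill-founded case: there one must show $\free{M_{\Delta(T)}}$ has the BAP in order to embed it complementably in $\pel$. For such tree-like spaces this is not a formal consequence of any $\ell_1$-decomposition, and it occupies most of the paper. Lemma \ref{th:tree_BAP} needs the positive-MAP of $\free{(\frac12 B_\pel)^*}$ from Lemma \ref{lm:M*}. It also needs the positivity-preserving perturbation of Lemma \ref{lem:positive_perturbation}, together with Lemma \ref{lem:est}, to handle two distinct unbounded subtrees. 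Your proposal does not address this step either.
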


We mention some consequences of this theorem. First, it means that the equivalence relation $\approx$ on $\vs$, defined by $d \approx d'$ if and only if $\free{M_d} \cong \free{M_{d'}}$, is not a $\DEL{1}{1}$ subset of $\vs \times \vs$. In particular, it is not smooth, that is, there is no Borel map $\map{f}{\vs}{\R}$ such that $d \approx d'$ if and only if $f(d)=f(d')$. As mentioned above, this goes towards validating the collective intuition, built up over decades of work,  that free spaces are difficult to classify up to linear isomorphism. Second, we get immediately Bossard's result that $\iclass{\pel}{}$ is not $\DEL{1}{1}$ as $\iclass{\pel}{\vs} = \Xi^{-1}(\iclass{\pel}{})$. Third, recall that, given an equivalence relation $\sim$ on a set $\Gamma$, we call $A \subseteq \Gamma$ a transversal, or section, if the intersection of $A$ with every equivalence class of $\sim$ is a singleton. By following the proof of \cite{bossard:02}*{Proposition 2.7 (i)} almost to the letter, we get that $\approx$ has no $\SIG{1}{1}$ transversal. In particular, we obtain another proof of the now well-established fact that $\approx$ has uncountably many equivalence classes (see \cites{basset:25,basset:lancien:prochazka:25,hajek:lancien:pernecka:16}).

We briefly outline the proof of Theorem \ref{th:pel_complete-analytic}. To each tree $T\in \Tr$ we will associate, in a continuous fashion, a complete infinite separable metric space labelled $M_{\Delta(T)}$, such that $\free{M_{\Delta(T)}} \cong \pel$ whenever $T$ is ill-founded ($T \in \IF$) and $M_{\Delta(T)}$ is discrete otherwise. This means that if $T$ is well-founded ($T\in\WF$) then $\free{M_{\Delta(T)}}$ has the Radon-Nikod\'ym property (RNP) \cite{aliaga:gartand:petitjean:prochazka:22}*{Theorem 4.6} and, as such, $\free{M_{\Delta(T)}} \not\cong\pel$. The template for the construction of the metric spaces $M_{\Delta(T)}$, including the proof that $M_{\Delta(T)}$ is discrete whenever $T\in\WF$, is provided in \cite{smith:26}. What remains is to demonstrate that $\free{M_{\Delta(T)}} \cong \pel$ whenever $T \in \IF$. Using Pe{\l}czy\'nski's decomposition method, this will follow if each space can be shown to be isomorphic to a complemented subspace of the other. Taking advantage of existing results, it will be straightforward to prove that $\pel$ is isomorphic to a complemented subspace of $\free{M_{\Delta(T)}}$ whenever $T \in \IF$. To show that $\free{M_{\Delta(T)}}$ is isomorphic to a complemented subspace of $\pel$, we will establish that $\free{M_{\Delta(T)}}$ has the BAP. Most of the paper is dedicated to proving this last statement.

The rest of the paper is organised as follows. In Section \ref{sec:prelims}, we give the definitions missing from the Introduction and reprise the relevant concepts and results from \cite{smith:26}. In Section \ref{sec:BAP_tree_spaces} we give the proof of Theorem \ref{th:pel_complete-analytic}. To order to overcome a technical obstacle in the proof (more precisely the proof of Lemma \ref{th:tree_BAP}), we introduce the `positive-BAP' for free spaces, which is an apparent strenghening of the usual BAP, and show that the free space over a particular auxiliary metric space has the positive-BAP. Some open questions are given at the end of the paper. We end the Introduction with a remark about Theorem \ref{th:pel_complete-analytic} and Bossard's original proof that $\iclass{\pel}{}$ is not $\DEL{1}{1}$.

\begin{remark}\label{rm:bossard_trivial}
It is worth asking whether Bossard's proof could be adapted to provide a straightforward proof of Theorem \ref{th:pel_complete-analytic} simply by considering the free spaces of the Banach spaces produced in his construction in \cite{bossard:02}*{Theorem 1.1}. The problem with this approach is that it is currently very difficult to distinguish the free spaces of infinite-dimensional separable Banach spaces. Given two such spaces $X$ and $Y$, we know that $\free{X}\not\cong\free{Y}$ if e.g.~$X$ has the BAP and $Y$ does not \cite{godefroy:kalton:03}, or if $X$ is superreflexive and $Y$ is not weakly sequentially complete \cites{aliaga:nous:petitjean:prochazka:21,kochanek:pernecka:18}. These conditions can't be applied to Bossard's original construction (or modified version thereof, as far as we can tell). This is why we have resorted to using complete discrete metric spaces instead of attempting such an adaptation. 
\end{remark}

\section{Notation and preliminaries}\label{sec:prelims}

The notation used for Banach spaces will be standard throughout. Recall that, given $\lambda \geq 1$, a Banach space $X$ is said to have the $\lambda$-bounded approximation property ($\lambda$-BAP) if, given $\ep>0$ and $x_1,\ldots,x_n \in X$, there is a finite-rank linear operator $\map{T}{X}{X}$ such that $\n{T} \leq \lambda$ and $\n{Tx_i-x_i} < \ep$ for $1 \leq i \leq n$. The space $X$ has the BAP if it has the $\lambda$-BAP for some $\lambda\geq 1$. The $1$-BAP is called the metric approximation property or MAP.

Next, we recall some standard facts about free spaces. First, we will exploit the natural identification $\Lip_0(M)\equiv\free{M}^*$ throughout. Second, $\map{\delta}{M}{\free{M}}$ is an isometric embedding and $\delta(M\setminus\{0_M\})$ is a linearly independent set. Third, given $A \subseteq M$ containing $0_M$, the universal property applied to the embedding of $A$ into $M$ yields a linear embedding of $\free{A}$ into $\free{M}$, which is isometric by virtue of McShane's extension theorem; this means we can and do naturally identify $\free{A}$ with a subspace of $\free{M}$. Fourth, let $\map{r}{M}{M}$ be an $L$-Lipschitz retraction onto a set $A\subseteq M$ containing $0_M$. By the universal property, there is a linear operator $\map{P}{\free{M}}{\free{M}}$ satisfying $\n{P}=L$ and $P \circ \delta = \delta \circ r$, which is easily seen to be a projection onto $\free{A}$. This can be used to show that if $M$ is isometric to a $1$-Lipschitz retract of $N$ (where the isometry and retraction don't necessarily preserve base points) then $\free{M}$ is linearly isometric to a 1-complemented subspace of $\free{N}$. Finally, recall that, given distinct $x,y \in M$, the elementary molecule $m_{xy} \in \free{M}$, defined by
\[
 m_{xy} = \frac{\delta(x)-\delta(y)}{d(x,y)},
\]
satisfies $\n{m_{xy}}=1$. By a straightforward Hahn-Banach separation argument we have $B_{\free{M}}=\cl{\conv}\set{m_{xy}}{x \neq y \in M}$. Hence, given a bounded linear operator $\map{T}{\free{M}}{\free{N}}$ between free spaces,
\begin{equation}\label{eqn:operator_norm}
 \n{T} = \sup\set{\n{Tm_{xy}}}{x \neq y \in M}.
\end{equation}

Turning now to notation and concepts from descriptive set theory, for the most part we follow \cite{kechris:95}. The sets of natural numbers and infinite sequences of natural numbers are denoted $\omega$ and $\omega^\omega$, respectively, and natural numbers will be treated as sets in the usual way. We identify the power set of a set $\Gamma$ with $2^\Gamma$. Borel and analytic subsets of Polish spaces are denoted $\DEL{1}{1}$ and $\SIG{1}{1}$, respectively. A subset $A$ of a Polish space $X$ is called $\SIG{1}{1}$-complete if it is $\SIG{1}{1}$ and, whenever $B$ is a $\SIG{1}{1}$ subset of a $0$-dimensional Polish space $Y$, there exists a continuous map $\map{f}{Y}{X}$ such that $B = f^{-1}(A)$ (such a map is called a reduction and we say that $B$ is reducible to $A$).

We denote by $\omega^{<\omega}$ the tree $\bigcup_{n\in\omega} \omega^n$ of all functions having domain some $n \in\omega$ and codomain $\omega$, including the empty function $\varnothing$. Given $s,t \in \omega^{<\omega}$ and $n\in\omega$, we denote by $|s|$ the length of $s$, which is equal to its domain; we write $s \preccurlyeq t$ when $t$ is an extension of $s$, i.e.~$s=t\restrict{|s|}$, and $s \perp t$ when $s,t$ are incomparable; and finally we let $s \wedge t$ denote the greatest element of $\omega^{<\omega}$ less than both $s$ and $t$. A subset $T \subseteq \omega^{<\omega}$ is called a tree if it is downwards-closed, i.e.~$s \in T$ whenever $s \preccurlyeq t$ and $t \in T$. The space of trees, with the compact topology inherited from $2^{\omega^{<\omega}}$, is denoted $\Tr$. The downwards-closure of a set $S \subseteq \omega^{<\omega}$ is the tree $\set{r \in \omega^{<\omega}}{r \preccurlyeq s \text{ for some }s \in S}$. The subsets of $\Tr$ of well-founded and ill-founded trees are labelled $\WF$ and $\IF$, respectively. It is well-known that $\IF$ is $\SIG{1}{1}$-complete and that a $\SIG{1}{1}$ set $A$ is $\SIG{1}{1}$-complete if $\IF$ is reducible to $A$.

Now we introduce the material we need from \cite{smith:26} and prove two additional lemmas.

\begin{proposition}[{\cite{smith:26}*{Proposition 3.1}}]\label{pr:reduction}
There is a continuous map $\map{\Xi}{\vs}{\prsb}$ such that $X_{\Xi(d)}\equiv \free{M_d}$ for all $d\in\vs$.
\end{proposition}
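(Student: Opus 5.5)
The plan is to build $\Xi(d)$ explicitly by choosing a countable $\Q$-linearly independent family in $\free{M_d}$ whose $\Q$-span is dense, and reading off the norms. Set $x_n = \delta(n+1)\in\free{M_d}$ for $n\in\omega$; these are images of the points $1,2,3,\ldots$ of $\omega$, the base point $0$ being excluded since $\delta(0)=0$. The family is linearly independent because $\delta(M\setminus\{0_M\})$ is. Its linear span is dense in $\free{M_d}$: $\free{M_d}$ is the closed span of $\delta(M_d)$, $\delta$ is an isometry, and $\omega$ is dense in $M_d$. For $v=(v_0,\ldots,v_k,0,\ldots)\in V$ we then define
\[
\Xi(d)(v) = \Big\|\sum_{n} v_n\,\delta(n+1)\Big\|_{\free{M_d}}.
\]
By linear independence this extends to a norm on $c_{00}$ (the same formula with real coefficients), so $\Xi(d)\in\prsb$. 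The map $v\mapsto \sum v_n\delta(n+1)$ extends to a linear isometry of $X_{\Xi(d)}$ onto the closure of the span, which is all of $\free{M_d}$. This gives $X_{\Xi(d)}\equiv\free{M_d}$.

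It remains to prove continuity. Since $\prsb$ carries the pointwise topology on $V$, it suffices to show that for each fixed $v$ the map $d\mapsto \Xi(d)(v)$ is continuous on $\vs$. The key observation is that the element $\sum_{n\le k} v_n\delta(n+1)$ lies in $\free{F}$, where $F=\{0,1,\ldots,k+1\}$. By McShane, $\free{F}$ sits isometrically in $\free{M_d}$, so its norm is computed in the finite metric space $(F,d\restrict{F\times F})$. For a finite pointed metric space the free norm is the Kantorovich--Rubinstein transport norm:
\[
\Big\|\sum_{i\in F} a_i\delta(i)\Big\| = \max\Big\{\sum_i a_i f(i) : f(0)=0,\ |f(i)-f(j)|\le d(i,j)\ \forall i,j\in F\Big\},
\]
which, by LP duality, also equals the minimal cost $\sum c_{ij}d(i,j)$ over nonnegative flows $c_{ij}$ balancing the $a_i$.

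This step is the main (mild) obstacle: showing the expression depends continuously on the finitely many values $d(i,j)$, $i,j\le k+1$. The direct route is a Lipschitz estimate. If $d,d'$ satisfy $|d(i,j)-d'(i,j)|\le\ep$ on $F$, take an optimal flow for $d$; used as a flow for $d'$ it changes the cost by at most $\ep\sum c_{ij}$. The optimal flows may be chosen with $\sum c_{ij}\le\sum_i|a_i|$ (send each unit of positive mass directly to a negative one, with the base point absorbing the imbalance). Hence the norm is upper semicontinuous, and by symmetry the norms differ by at most $\ep\sum|a_i|$. So $d\mapsto\Xi(d)(v)$ is continuous, indeed locally Lipschitz in finitely many coordinates, and $\Xi$ is continuous.

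An alternative to the flow argument is to note that the norm is the maximum of the linear functional $\sum a_if(i)$ over a polytope whose defining inequalities depend continuously on $d$. The flow bound, however, is cleaner and gives an explicit modulus.
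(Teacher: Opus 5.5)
The paper gives no proof of this statement; it is quoted from \cite{smith:26}*{Proposition 3.1}, so there is no in-text argument to compare yours against. Judged on its own terms, your proof is correct and is the natural one. The three main steps are all sound:
the coding $\Xi(d)(v)=\n{\sum_n v_n\delta(n+1)}$, where you rightly omit the base point so that the extension to $c_{00}$ is a genuine norm;
the identification $X_{\Xi(d)}\equiv\free{M_d}$ via density of $\omega$ in $M_d$;
and the McShane reduction of continuity to the finitely many distances $d(i,j)$, $i,j\leq k+1$.

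The one step that needs an extra line is the existence of an optimal flow of total mass at most $\sum_i|a_i|$. Strong LP duality holds because the primal problem is feasible ($f=0$) and bounded ($|f(i)|\leq d(i,0)$). Now decompose an optimal flow into paths and cycles, delete the cycles, and shortcut each path using the triangle inequality. This gives an optimal direct transport plan of mass $\frac{1}{2}\sum_{i\in F}|a_i|\leq\sum_{i\neq 0}|a_i|$, where $a_0=-\sum_{i\neq 0}a_i$.

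You can also avoid flows altogether. The identity map $(F,d)\to(F,d')$ is $L$-Lipschitz with $L=\max_{i\neq j}d'(i,j)/d(i,j)$, so the universal property gives $\pn{m}{d'}\leq L\pn{m}{d}$, and the symmetric bound holds too. Since $L\to 1$ as $d'\to d$ on $F$, continuity follows at once. The modulus here depends on $\min_{i\neq j}d(i,j)$, but that does not matter because only pointwise continuity is needed.
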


%
%

We denote by $\vsd$ the set $\set{d \in \vs}{M_d \text{ is discrete}}$. We outline a way of manufacturing an abundance of elements of $\vsd$. Fix a metric $\rho$ on $\omega^{<\omega}$ as follows. Let $(e_s)_{s\in \omega^{<\omega}}$ denote the usual basis of $c_{00}(\omega^{<\omega})$, define the injection $\map{\Omega}{\omega^{<\omega}}{c_{00}(\omega^{<\omega})}$ by
\[
 \Omega(s) = \sum_{r \preccurlyeq s} 2^{-|r|}e_r, \quad s \in \omega^{<\omega}.
\]
and then set $\rho(s,t)=\pn{\Omega(s)-\Omega(t)}{\infty}$, $s,t \in \omega^{<\omega}$. It is easy to show that $\rho(s,t) = 2^{-|s \wedge t|-1}$ whenever $s \neq t$ (see \cite{smith:26}*{above Lemma 3.8}). We will make use of this equality numerous times below. 

Next, given a pointed separable metric space $M=(M,\theta)$, fix a dense sequence $(x_i)_{i\in\omega}$ in $M$ whose elements repeat infinitely often. We can and do assume that $x_0=0_M$. Define a second metric $\eta$ on $\omega^{<\omega}$ by
\[
  \eta(s,t) = \rho(s,t)+\theta(x_{|s|},x_{|t|}).
\]
Let $W=\omega^{<\omega} \cup (\omega^{<\omega} \times \{0\})$. We extend $\eta$ to a metric on $W$, again labelled $\eta$, by setting for $x \neq y$
 \[
  \eta(x,y) = \begin{cases}
  1 & \text{if }x,y \notin \omega^{<\omega},\\
  1+\eta(\varnothing,x) & \text{if }x \in \omega^{<\omega},\, y \notin \omega^{<\omega},\\
  1+\eta(\varnothing,y) & \text{if }x \notin \omega^{<\omega},\, y \in \omega^{<\omega}.
  \end{cases}
 \]

Given a tree $T \in \Tr$, we will call the metric space $(T,\eta)$ a tree over $M$ (this should not be confused with the idea of an $\R$-tree, which is something else altogether). The base point of $T$ will always be $\varnothing$. We state the next lemma for use later.

\begin{lemma}[{\cite{smith:26}*{Lemma 3.9}}]\label{lem:combine} Let $M=(M,\theta)$ be a complete separable metric space. There exists a continuous map $\map{\Delta}{\Tr}{\vs}$ such that $\Delta(T) \in \vsd$ whenever $T \in \WF$ and $M$ is isometric to a 1-Lipschitz retract of $M_{\Delta(T)}$ whenever $T \in \IF$.
\end{lemma}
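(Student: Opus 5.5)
We would build $\Delta(T)$ by pulling back the metric $\eta$ on $W$ along an enumeration that depends on $T$ only coordinatewise. Fix a bijection $i\mapsto s_i$ from $\omega$ onto $\omega^{<\omega}$ with $s_0=\varnothing$. Given $T\in\Tr$, define $\varphi_T(i)=s_i$ if $s_i\in T$, and $\varphi_T(i)=(s_i,0)$ otherwise. For nonempty $T$ we have $\varnothing\in T$, so $0\mapsto\varnothing$ and the base points match; the empty tree is handled by a trivial convention. Then set $\Delta(T)(i,j)=\eta(\varphi_T(i),\varphi_T(j))$.

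Since $\varphi_T$ is injective and $\eta$ is a metric on $W$, $\Delta(T)\in\vs$. Each value $\Delta(T)(i,j)$ depends only on whether $s_i\in T$ and whether $s_j\in T$, which is a condition on finitely many coordinates of $T\in 2^{\omega^{<\omega}}$. Hence $\Delta$ is continuous for the pointwise topology.

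\emph{Case $T\in\WF$.} We would show that $(W_T,\eta)$, where $W_T=\varphi_T(\omega)$, has no Cauchy sequences other than eventually constant ones. Then it is already complete, so $M_{\Delta(T)}=W_T$, and every point is isolated.
\begin{itemize}
\item The points $(s,0)$ are pairwise at distance $1$, and each lies at distance $>1$ from every tree point. So a Cauchy sequence that is not eventually constant is eventually in $T$.
\item For tree points, $\eta\ge\rho$ and $\rho(s,t)=2^{-|s\wedge t|-1}$ for $s\neq t$. A Cauchy sequence $(t_n)$ in $T$ that is not eventually constant therefore satisfies $|t_n\wedge t_m|\to\infty$.
\item This determines a unique $\sigma\in\omega^\omega$ with $\sigma\restrict{k}\preccurlyeq t_n$ for all large $n$. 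Since $T$ is downward closed, $\sigma$ is a branch of $T$, which contradicts well-foundedness.
\end{itemize}

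\emph{Case $T\in\IF$.} Pick a branch $\sigma$ of $T$.
\begin{itemize}
\item For $n<m$ we have $\eta(\sigma\restrict{n},\sigma\restrict{m})=2^{-n-1}+\theta(x_n,x_m)$.
\item Given $x\in M$, choose $n_k\to\infty$ with $x_{n_k}\to x$. This is possible because the $x_i$ are dense and each repeats infinitely often. Then $(\sigma\restrict{n_k})$ is Cauchy and has a limit $\hat x\in M_{\Delta(T)}$.
\item Because the $\rho$-part tends to $0$, $\hat x$ does not depend on the choice of $(n_k)$, and $d(\hat x,\hat y)=\theta(x,y)$.
\item So $x\mapsto\hat x$ is an isometry of $M$ onto a set $L$, and $L$ is closed because $M$ is complete.
\end{itemize}

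For the retraction, define $f$ on $W_T$ by $f(s)=\widehat{x_{|s|}}$ for $s\in T$, and $f((s,0))=\widehat{x_0}$. We check that $f$ is $1$-Lipschitz:
\begin{itemize}
\item For $s,t\in T$: $\theta(x_{|s|},x_{|t|})\le\eta(s,t)$.
\item For a tree point $t$ and a point $(s,0)$: $\theta(x_0,x_{|t|})\le\eta(\varnothing,t)<1+\eta(\varnothing,t)$.
\item For two points $(s,0),(t,0)$: both map to $\widehat{x_0}$, at distance $0$.
\end{itemize}
Extend $f$ to $M_{\Delta(T)}$ by uniform continuity. On $L$ we get $f(\hat x)=\lim_k\widehat{x_{n_k}}=\hat x$, so $f$ is a $1$-Lipschitz retraction onto $L\equiv M$.

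The main point requiring care is the well-definedness and isometry of $x\mapsto\hat x$, which relies on the infinite-repetition hypothesis on $(x_i)$. The rest is routine.
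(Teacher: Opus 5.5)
Your proof is correct and follows essentially the same route as the cited argument. The paper records that $(\omega,\Delta(T))$ is isometric to $(T\cup((\omega^{<\omega}\setminus T)\times\{0\}),\eta)$, which is exactly your pull-back of $\eta$; discreteness for $T\in\WF$ then comes from extracting a branch from a non-trivial Cauchy sequence, and for $T\in\IF$ a copy of $M$ sits along a branch as a $1$-Lipschitz retract. One harmless slip: $\eta(\varnothing,(s,0))=1$ rather than $>1$, but a lower bound of $1$ is all your Cauchy argument uses.
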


In the proof of this lemma, it is shown that the metric space $(\omega,\Delta(T))$ is isometric to
\[
(T \cup ((\omega^{<\omega}\setminus T) \times \{0\}),\eta).
\]
Having this in mind, we deduce an easily proved yet important fact about $\free{M_{\Delta(T)}}$.

\begin{lemma}\label{lm:tree_BAP}
Let $T \in \Tr$. If $\free{T,\eta}$ has the BAP then so does $\free{M_{\Delta(T)}}$.
\end{lemma}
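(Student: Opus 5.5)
The plan is to exploit the explicit structure of $M_{\Delta(T)}$. By the remark preceding the lemma, $(\omega,\Delta(T))$ is isometric to $N:=(T\cup((\omega^{<\omega}\setminus T)\times\{0\}),\eta)$, with base point $\varnothing$ corresponding to $0$. Hence $\free{M_{\Delta(T)}}\equiv\free{\cl{N}}$, where $\cl{N}$ is the completion of $N$. Write $A=T$ and $B=(\omega^{<\omega}\setminus T)\times\{0\}$. By the definition of $\eta$, any two distinct points of $B$ are at distance exactly $1$. Moreover, a point $b\in B$ and a point $x\in A$ are at distance $1+\eta(\varnothing,x)$. So the points of $B$ are isolated from everything else, and $\cl{N}=\cl{A}\cup B$, where $\cl{A}$ is the completion of $(T,\eta)$. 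Since $\free{T,\eta}$ is dense in $\free{\cl{A}}$ and the BAP passes from a dense subspace spanned this way, we get this: if $\free{T,\eta}$ has the $\lambda$-BAP then so does $\free{\cl{A}}$. This follows by extending finite-rank operators by continuity, approximating target vectors from the dense subspace, and using the fact that $\free{T,\eta}$ is canonically identified with a dense subspace of $\free{\cl{A}}$. (Alternatively, work with $\cl{A}$ throughout, since $\free{\cl{A}}\equiv\free{T,\eta}$ via the canonical isometry, completion not changing the free space.) So it suffices to pass the BAP from $\free{\cl A}$ to $\free{\cl A\cup B}$.

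The key step is to show that $\free{\cl{N}}$ is isomorphic to $\free{\cl{A}}\oplus_1\ell_1(B)$, or at least that it has the BAP whenever $\free{\cl A}$ does. I would define the retraction $\map{r}{\cl{N}}{\cl{A}}$ by $r=\mathrm{id}$ on $\cl{A}$ and $r(b)=\varnothing$ for $b\in B$. I expect this to be $1$-Lipschitz. For $x,y\in\cl A$ it is trivial. For $b\in B$ and $x\in\cl A$ we have $\eta(r(b),r(x))=\eta(\varnothing,x)\leq 1+\eta(\varnothing,x)=\eta(b,x)$. For $b\neq b'$ in $B$ the image distance is $0$. This gives a norm-one projection $P$ onto $\free{\cl{A}}$. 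The complementary range $(I-P)\free{\cl N}$ is the closed span of the vectors $\delta(b)-\delta(\varnothing)$ for $b\in B$, since $(I-P)\delta(x)=0$ for $x\in\cl A$. I then plan to show that these vectors are equivalent to the unit vector basis of $\ell_1(B)$. The upper bound is trivial, since $\n{\delta(b)}=1+0=1$. For the lower bound, given signs $\ep_b$, define $f=\ep_b$ on $b\in B$ and $f=0$ on $\cl A$. This $f$ is Lipschitz with constant at most $2$, because pairs in $B$ are at distance $1$ with difference at most $2$, and mixed pairs are at distance at least $1$ with difference at most $1$. Testing against $f$ gives $\n{\sum a_b\delta(b)}\geq\frac12\sum|a_b|$.

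Finally I combine the pieces. Since $\free{\cl N}\cong\free{\cl A}\oplus\ell_1(B)$ via $P$, and both summands have the BAP ($\ell_1$ has the MAP; a finite $B$ is trivial), the direct sum has the BAP. The BAP is preserved under isomorphism, with a change of constant. I expect no serious obstacle here. The only points needing care are the identification of the completion, namely that the points of $B$ remain isolated and that Cauchy sequences eventually lie in $A$, and the Lipschitz estimate for $r$ and $f$ in the mixed case. Both follow directly from the definition of $\eta$ on $W$.
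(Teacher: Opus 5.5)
Your proof is correct and follows essentially the same route as the paper. In both, the $1$-Lipschitz retraction that fixes $T$ and sends the points of $(\omega^{<\omega}\setminus T)\times\{0\}$ to $\varnothing$ splits off a complement isomorphic to $\ell_1$, giving $\free{M_{\Delta(T)}}\cong\free{T,\eta}\oplus\ell_1$. The remaining differences are cosmetic. The paper avoids completions by using $\free{M_{\Delta(T)}}\equiv\free{\omega,\Delta(T)}$ directly, and it identifies the complement as the free space of a $0$-$1$ discrete space, whereas you verify the $\ell_1$ estimates by hand. Your ``dense subspace'' step is unnecessary, since $\free{T,\eta}\equiv\free{\cl{A}}$ outright, as your own parenthetical alternative notes.
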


\begin{proof}
Assume that $T \neq \omega^{<\omega}$, else the conclusion is immediate. Set $W_T= T \cup A_T$, where $A_T=(\omega^{<\omega}\setminus T) \times \{0\}$. It is clear that the map $\map{r}{(W_T,\eta)}{(W_T,\eta)}$, defined by
\[
 r(x) = \begin{cases} x & \text{if }x \in T,\\
  \varnothing & \text{if }x \notin T,
  \end{cases}
\]
is a $1$-Lipschitz retraction onto $T$. By the universal property of free spaces, the unique linear map $\map{P}{\free{W_T,\eta}}{\free{W_T,\eta}}$ satisfying $P\delta(x)=\delta(r(x))$, $x \in W_T$, is a projection onto $\free{T,\eta}$, and it is obvious that $I-P$ maps onto $\free{A_T,\eta}$. As $T \neq \omega^{<\omega}$, $A_T$ is infinite, thus $(A_T,\eta)$ is isometric to $\omega$ equipped with the $0$-$1$ discrete metric. Therefore
\[
 \free{M_{\Delta(T)}} \equiv \free{\omega,\Delta(T)} \equiv \free{W_T,\eta} \cong \free{T,\eta} \oplus \free{A_T,\eta} \cong \free{T,\eta} \oplus \ell_1.
\]
The result follows.
\end{proof}

We end this section with a tool for making natural $1$-$\rho$-Lipschitz retractions from trees onto substrees. 

\begin{lemma}\label{lm:subtree_retraction}
 Let $S$ be a subtree of a tree $T$ and define $\map{v}{T}{S}$ by 
 \[
  v(t) = \max\set{r \in S}{r \preccurlyeq t}.
 \]
 Let $s,t \in T$. Then $s \wedge t \in S$ implies $v(s) \wedge v(t) = s \wedge t$, and $s \wedge t \notin S$ implies $v(s)=v(t)$. Consequently, $v$ is a $1$-$\rho$-Lipschitz retraction onto $S$.
\end{lemma}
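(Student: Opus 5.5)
Everything rests on the formula $\rho(s,t)=2^{-|s\wedge t|-1}$ for $s\neq t$, so the plan is to reduce each claim to statements about meets. First observe that $v$ is well defined. The set $\set{r\in S}{r\preccurlyeq t}$ is a chain of initial segments of $t$, hence finite. It is nonempty because $\varnothing\in S$, which holds provided $S$ is nonempty, as we tacitly assume. So it has a maximum, and $v(t)\preccurlyeq t$. If $t\in S$, then $t$ itself is the maximum, so $v$ is the identity on $S$ and $v$ is a retraction onto $S$.

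\emph{Case $s\wedge t\in S$.} Put $u=s\wedge t$. Since $u\preccurlyeq s$ and $u\in S$, maximality gives $u\preccurlyeq v(s)$, and likewise $u\preccurlyeq v(t)$. Hence $u\preccurlyeq v(s)\wedge v(t)$. Conversely, $v(s)\preccurlyeq s$ and $v(t)\preccurlyeq t$, so every common initial segment of $v(s)$ and $v(t)$ is a common initial segment of $s$ and $t$. Thus $v(s)\wedge v(t)\preccurlyeq u$, and therefore $v(s)\wedge v(t)=s\wedge t$.

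\emph{Case $s\wedge t\notin S$.} Put $u=s\wedge t$. Both $v(s)$ and $v(t)$ are initial segments of $s$ and of $t$ respectively, and they lie in $S$. We claim $v(s)\prec u$. Both $v(s)$ and $u$ are initial segments of $s$, so they are comparable. If $u\preccurlyeq v(s)$, then $u\in S$ because $S$ is downward closed, a contradiction. So $v(s)\prec u$, and similarly $v(t)\prec u$. Now every initial segment of $u$ is an initial segment of both $s$ and $t$. Consequently
\[
v(s)=\max\set{r\in S}{r\preccurlyeq u}=v(t),
\]
where $\max\set{r\in S}{r\preccurlyeq s}$ equals $\max\set{r\in S}{r\preccurlyeq u}$ because, by downward closure, the elements of $S$ below $s$ are exactly those below $u$. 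This last point is the only slightly delicate step. It uses that the elements of $S$ below $s$ form an initial chain that cannot pass $u$, since $u\notin S$ and $S$ is downward closed.

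\emph{Lipschitz estimate.} If $v(s)=v(t)$ there is nothing to prove, so assume $v(s)\neq v(t)$. By the second case this forces $s\wedge t\in S$, and then the first case gives
\[
\rho(v(s),v(t))=2^{-|v(s)\wedge v(t)|-1}=2^{-|s\wedge t|-1}=\rho(s,t),
\]
where $s\neq t$ holds automatically since $v(s)\neq v(t)$. Hence $\rho(v(s),v(t))\leq\rho(s,t)$ in all cases, and $v$ is a $1$-$\rho$-Lipschitz retraction onto $S$. The main obstacle is nothing more than bookkeeping in the second case; no genuine difficulty is expected.
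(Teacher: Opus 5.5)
Your proof is correct and follows the paper's argument essentially step for step. It uses the same maximality argument when $s\wedge t\in S$, the same comparability-plus-downward-closure argument showing $v(s),v(t)\prec s\wedge t$ when $s\wedge t\notin S$, and the same meet formula for $\rho$ in the Lipschitz estimate. The one cosmetic difference is that you identify both $v(s)$ and $v(t)$ with $\max\set{r\in S}{r\preccurlyeq s\wedge t}$, where the paper applies maximality twice to get $v(s)\preccurlyeq v(t)\preccurlyeq v(s)$; your explicit checks that $v$ is well defined and fixes $S$ are details the paper leaves implicit.
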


\begin{proof}
Assume $s \wedge t \in S$. Clearly $v(s) \wedge v(t) \preccurlyeq s \wedge t$. The fact that $s \wedge t \in S$ implies $s \wedge t \preccurlyeq v(s)$ by maximality of $v(s)$, and likewise $s \wedge t \preccurlyeq v(t)$. Hence $v(s) \wedge v(t) = s \wedge t$. Now assume $s \wedge t \notin S$. Then $v(s), s \wedge t \preccurlyeq s$ implies they are comparable, and $s \wedge t \preccurlyeq v(s)$ is impossible as $S$ is a subtree. Hence $v(s) \prec s \wedge t \preccurlyeq t$, which implies $v(s) \preccurlyeq v(t)$ by maximality of $v(t)$. Similarly $v(t) \preccurlyeq v(s)$, giving equality.

Now assume $v(s) \neq v(t)$. From above, $v(s) \wedge v(t) = s \wedge t$, and thus
\[
 \rho(v(s),v(t)) = 2^{-|v(s) \wedge v(t)|-1} = 2^{-|s \wedge t|-1} = \rho(s,t).
\]
We conclude that $v$ is $1$-Lipschitz with respect to $\rho$.
\end{proof}

Because $v$ does not preserve the heights of elements, it will not be an $\eta$-Lipschitz retraction in general.

\section{The proof of Theorem \ref{th:pel_complete-analytic}}\label{sec:BAP_tree_spaces}

The proof of Theorem \ref{th:pel_complete-analytic} requires a series of lemmas, the most involved being Lemma \ref{th:tree_BAP}. In order to overcome a technical obstacle in the proof of the latter, we will need a certain auxiliary space to have a property apparently stronger than the BAP.

A Banach lattice $X$ is said to have the $\lambda$-positive-BAP if the operators $T$ in the definition of $\lambda$-BAP can be chosen to be positive as well \cite{nielsen:88}. Free spaces enjoy the following weak notion of positivity. An element $m \in \free{M}$ is said to be positive, written $m \geq 0$, if $\da{f}{m} \geq 0$ whenever $f \in \Lip_0(M)$ satisfies $f \geq 0$ \cite{weaver:18}*{p.~96}. While free spaces are almost never Banach lattices with respect to the induced partial order, this idea is nevertheless useful from time to time. We shall call a bounded linear operator $\map{T}{\free{M}}{\free{N}}$ positive if $T\delta(x)\geq 0$ for all $x \in M$. It is obvious that $T$ is positive if and only if $T^*$ is positive, that is, $T^*f \geq 0$ whenever $f \in \Lip_0(N)$ and $f \geq 0$. Thus, the composition of two positive operators is again positive. Having this idea in mind, we will say that $\free{M}$ has the $\lambda$-positive-BAP if the operators $T$ in the definition of $\lambda$-BAP can be chosen to be positive as well. The $1$-positive-BAP will be called the positive-MAP.

Given a pointed metric space $(M,\theta)$, we shall define an extension $(M^*,\theta)$ which also involves a change of base point. Set $M^*=\{0^*\} \cup M$, where $0_{M^*}=0^* \notin M$ is the new base point. Then extend the metric $\theta$ to $M^*$ by setting $\theta(0^*,x)=\theta(x,0^*)=\frac{1}{2}+\theta(0_M,x)$ whenever $x \in M$. The auxiliary space required by Lemma \ref{th:tree_BAP} is $(\frac{1}{2}B_\pel)^*$, where $\frac{1}{2}B_\pel$ has its usual base point $0$ and metric $\theta$ inherited from the canonical norm on $\pel$.

\begin{lemma}\label{lm:M*}
The free space $\free{(\frac{1}{2}B_\pel)^*}$ has the positive-MAP.
\end{lemma}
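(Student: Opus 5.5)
We want finite-rank operators on $\free{M^*}$, where $M=\frac12 B_\pel$ and $0^*$ is the base point, with norm $\le 1$ that approximate the identity on finite sets and satisfy $T\delta(x)\geq 0$. The plan is to build them from Lipschitz maps $\map{\phi}{M}{M}$ with finite-dimensional range, using the linear structure of $\pel$, and then linearise them on $M^*$.

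First, $\pel$ has a basis, so there are finite-rank projections $P_n$ with $\sup_n\n{P_n}=K<\infty$ and $P_n x\to x$. Since $\pel$ has a \emph{monotone} basis for an equivalent norm only, I would instead use the fact that $\pel$ has the MAP in its canonical norm; this is standard, since $\pel$ is built as a $1$-complemented ($\ell_2$-)sum of finite-dimensional pieces. This gives finite-rank contractions $S_n$ on $\pel$ with $S_n x\to x$ for all $x$. Then $S_n$ maps $M=\frac12B_\pel$ into itself, is $1$-Lipschitz, and fixes $0_M$.

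Next, extend $S_n$ to $\map{\phi_n}{M^*}{M^*}$ by setting $\phi_n(0^*)=0^*$. This map is $1$-Lipschitz, because for $x\in M$
\[
\theta(\phi_n(x),0^*)=\tfrac12+\n{S_nx}\le \tfrac12+\n{x}=\theta(x,0^*).
\]
Its linearisation $\widehat{\phi_n}$ has norm $\le 1$. It is positive: $\widehat{\phi_n}\delta(x)=\delta(\phi_n(x))$, and $\delta(y)\geq 0$ for every $y$, since $f(y)\ge 0$ whenever $f\geq0$. Moreover $\widehat{\phi_n}\to I$ pointwise on each $\delta(x)$, and hence on all of $\free{M^*}$ by boundedness and density. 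However, $\widehat{\phi_n}$ is not finite rank, because $\phi_n(M^*)=\{0^*\}\cup S_n(M)$ is a ball in a finite-dimensional space $E_n$.

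The main step is therefore to reduce to a finite-dimensional compact piece $K_n=\{0^*\}\cup(\frac12 B_{E_n})$. The free space of a finite-dimensional normed space ball, and of its $*$-extension, has the MAP but is infinite-dimensional. I would approximate it using positive finite-rank operators of norm $\le 1+\ep$, for instance those obtained by a Lipschitz partition-of-unity or simplicial interpolation of the finite-dimensional ball onto a finite net. Such interpolation operators send $\delta(y)$ to convex combinations of $\delta$'s at net points, which are positive. The difficulty is getting the norm to be exactly $\le 1$, that is, the MAP rather than merely the BAP. For the constant I would invoke the known result that $\free{X}$, and hence $\free{B_X}$, has the MAP for finite-dimensional $X$ (Godefroy--Kalton, Pernecká--Smith), and check that the operators used there are of the form $\sum_j \lambda_j(y)\delta(y_j)$ with $\lambda_j\ge0$. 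The base point $0^*$ is at distance $\frac12+\n{x}$, which is compatible with radial structure, so I would treat it as an extra vertex.

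Finally, I would compose: $T=R\circ\widehat{\phi_n}$, with $R$ the positive finite-rank near-contraction on $\free{K_n}$ composed with the inclusion. Then $T$ is positive, finite rank, and approximates the identity on given points. To remove the $1+\ep$, rescale via $\frac{1}{1+\ep}T$, which is harmless for approximation, so the positive-MAP follows. I expect the norm-one positive finite-dimensional step to be the main obstacle.
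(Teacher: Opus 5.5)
Your proposal is essentially the paper's proof. The paper likewise compresses through a finite-rank contraction of $\pel$ (it takes a contractive finite-rank projection $q$) extended by $0^*\mapsto 0^*$, which linearises to a positive norm-one map into $\free{K^*}$ for a finite-dimensional compact convex $K$, and then composes with positive finite-rank norm-one operators on $\free{K^*}$. The paper isolates that last step as Lemma~\ref{lm:positive_K*} and does it as you plan, by checking that the Perneck\'a--Smith operators $\Lambda SQ$ (dilation, convolution with a positive mollifier, coordinatewise affine interpolation), each set to $0$ at $0^*$, are positive; note that it is the dilation and mollification, not interpolation alone, that bring the norm down to $1+\ep$, and that taking $0^*$ as base point means no constant need be subtracted after convolving, which is what keeps that step positive.
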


\begin{proof}
It is sufficient to show that, given $x_1,\ldots,x_n \in \frac{1}{2}B_\pel$ and $\ep>0$, there exists a finite-rank positive linear operator $T$ on $\free{(\frac{1}{2}B_\pel)^*}$ such that $\n{T}\leq 1$ and $\n{T\delta(x_i)-\delta(x_i)}\leq\ep$ for $1 \leq i \leq n$. Given $x_1,\ldots,x_n \in \frac{1}{2}B_\pel$ and $\ep>0$, pick a finite-rank contractive linear projection $q$ on $\pel$ such that $\n{qx_i-x_i} \leq \frac{1}{2}\ep$ for $1 \leq i \leq n$. Since $q\pel$ has dimension $N<\infty$, $q(\frac{1}{2}B_\pel)  \subseteq \frac{1}{2}B_\pel$ is isometric to a compact convex set $K \subseteq \R^N$ (with respect to some norm) having $0$ as an interior point; hereafter we identify $q(\frac{1}{2}B_\pel)$ with $K$. We extend $q$ to a 1-Lipschitz retraction $q^*$ of $(\frac{1}{2}B_\pel)^*$ onto $K^*$ by setting $q^*(0^*)=0^*$ and $q^*(x)=qx$, $x \in \frac{1}{2}B_\pel$. By the universal property, there exists a unique linear map $\map{R}{\free{(\frac{1}{2}B_\pel)^*}}{\free{K^*}}$ such that $R\delta(x)=\delta(q^*(x))$, $x \in (\frac{1}{2}B_\pel)^*$.

According to Lemma \ref{lm:positive_K*} below, there exists a finite-rank positive linear operator $S$ on $\free{K^*}$ such that $\n{S} \leq 1$ and $\n{S\delta(qx_i) - \delta(qx_i)} \leq \frac{1}{2}\ep$ for $1 \leq i \leq n$. Evidently $R$ is also positive. Then $\map{T:=SR}{\free{(\frac{1}{2}B_\pel)^*}}{\free{K^*} \subseteq \free{(\frac{1}{2}B_\pel)^*}}$ is positive, has finite rank and satisfies $\n{T}\leq 1$. Recalling that $\delta$ is an isometric embedding, we obtain
\begin{align*}
\n{T\delta(x_i)-\delta(x_i)} &\leq \n{S\delta(qx_i)-\delta(qx_i)} + \n{\delta(qx_i)-\delta(x_i)}\\ &\leq \tfrac{1}{2}\ep + \n{qx_i-x_i} \leq \ep, \quad 1 \leq i \leq n. \qedhere
\end{align*}
\end{proof}

The proof of the next lemma follows the approach to generating free spaces having the MAP developed in \cite{pernecka:smith:15}. To avoid repetition, we sketch the argument, concentrating on the parts where the arguments differ.

\begin{lemma}[{cf.~\cite{pernecka:smith:15}*{Corollary 1.2}}]\label{lm:positive_K*}
Let $\ndot$ be a norm on $\R^N$ and let $K \subseteq \R^N$ be a compact convex set having $0$ as an interior point (and base point), with metric $\theta$ inherited from $\ndot$. Assume $\n{x} \leq 1$ for all $x \in K$. Then $\free{K^*}$ has the positive-MAP.
\end{lemma}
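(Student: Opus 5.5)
Following \cite{pernecka:smith:15}, I will construct, for each fine enough grid, a finite-rank operator on $\free{K^*}$ by interpolating Lipschitz functions along a simplicial decomposition of a neighbourhood of $K$. Using a barycentric (piecewise-linear) interpolation makes the operator automatically positive. Then I extend it over the new base point $0^*$ and show the norm is at most $1+o(1)$. Finally I rescale to get norm at most $1$.

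\textbf{Step 1: the approximating operators.} Since $0$ is interior to $K$, for $\lambda<1$ close to $1$ the dilation $x\mapsto\lambda x$ maps $K$ into the interior of $K$. Composing with it costs only $\n{\delta(\lambda x)-\delta(x)}\le (1-\lambda)$, so it suffices to approximate on $\lambda K$. Fix a fine triangulation of $\R^N$ into simplices of mesh $h$ with vertex set $G_h$, and let $(\phi_g)_{g\in G_h}$ be the piecewise-linear hat functions. These are nonnegative and sum to $1$. Define $T_h$ on $\free{K^*}$ by
\[
T_h\delta(x)=\sum_{g} \phi_g(\lambda x)\,\delta(g)\quad (x\in K),\qquad T_h\delta(0^*)=\delta(0^*)=0.
\]
Here only the finitely many $g\in K$ near $\lambda K$ appear. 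This is well defined, since $\delta(0^*)=0$ is the base point, and $T_h$ has finite rank.

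Positivity is then immediate. If $f\ge 0$ is Lipschitz on $K^*$ with $f(0^*)=0$, then $\da{f}{T_h\delta(x)}=\sum_g\phi_g(\lambda x)f(g)\ge 0$.

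Convergence is also easy. We have $\n{T_h\delta(x)-\delta(\lambda x)}\le\sum_g\phi_g(\lambda x)\theta(g,\lambda x)\le h$. Hence $\n{T_h\delta(x_i)-\delta(x_i)}\le h+(1-\lambda)$.

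\textbf{Step 2: the norm bound.} By \eqref{eqn:operator_norm} it suffices to bound $\n{T_hm_{xy}}$ for distinct $x,y\in K^*$. There are two cases.

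\emph{Case $y=0^*$.} Then $T_h m_{x0^*}=\sum_g\phi_g(\lambda x)\delta(g)/\theta(x,0^*)$. Its norm is at most
\[
\sum_g\phi_g(\lambda x)\,\theta(g,0^*)/\theta(x,0^*)=\frac{\tfrac12+\sum_g\phi_g(\lambda x)\n{g}}{\tfrac12+\n{x}}.
\]
Since $\sum\phi_g(\lambda x)\n g\le \lambda\n x+h$, this is at most $1$ as soon as $h\le(1-\lambda)\n{x}$. When $\n x$ is small, the $\tfrac12$ in both numerator and denominator absorbs the error: the bound is $\le 1+2h$. This is exactly where the extra point $0^*$ helps. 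Distances to the base point are bounded below by $\tfrac12$, so additive errors become multiplicative $1+O(h)$.

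\emph{Case $x,y\in K$.} Here $T_hm_{xy}$ is a combination of molecules between grid points. The key estimate of \cite{pernecka:smith:15} is a bound $\n{\sum_g(\phi_g(\lambda x)-\phi_g(\lambda y))\delta(g)}\le C\lambda\theta(x,y)$. For a general norm and a general triangulation, the constant $C$ need not be $1$. This is the main obstacle. To get $C\to 1$, I would follow their device of averaging the interpolation over translates of the grid (a convolution), i.e.\ replace $\phi_g(z)$ by $\int\phi_g(z+u)\,d\nu(u)$ for a probability $\nu$ on a cell. Translation-averaging gives
\[
\da{f}{T_hm_{xy}}=\int \frac{\tilde f_u(\lambda x)-\tilde f_u(\lambda y)}{\theta(x,y)}\,d\nu(u),
\]
where $\tilde f_u$ is the interpolant of $f$ on the shifted grid. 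One then shows that the averaged interpolant of a $1$-Lipschitz $f$ is $(1+\ep_h)$-Lipschitz with $\ep_h\to0$. Positivity survives averaging, since the kernels remain nonnegative, and finite rank survives too, if the shifts are restricted to a finite set or the range is the span of finitely many $\delta(g)$.

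\textbf{Step 3: conclusion.} Combining the two cases gives $\n{T_h}\le 1+\ep_h$. Set $T=(1+\ep_h)^{-1}T_h$. This operator is positive, has finite rank and satisfies $\n T\le1$. Moreover $\n{T\delta(x_i)-\delta(x_i)}\le h+(1-\lambda)+\ep_h\n{x_i}$, which can be made less than $\ep$. This proves that $\free{K^*}$ has the positive-MAP.
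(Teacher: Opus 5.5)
There is a genuine gap, and it sits exactly where you flag the main obstacle. The other parts of your proposal are routine and fine: positivity, finite rank, the approximation estimate, and the case $y=0^*$, where the $\frac12$ does absorb the additive errors. But the bound $\n{T_hm_{xy}}\le 1+\ep_h$ with $\ep_h\to0$ for $x,y\in K$ is the whole content of the lemma, and your averaging device cannot deliver it, because it has no small parameter. If $\nu$ lives on a cell of the mesh-$h$ grid, the entire scheme commutes with the rescalings $f\mapsto h^{-1}f(h\,\cdot)$, which preserve Lipschitz constants. So the worst-case ratio $\Lip(T_h^*f)/\Lip(f)$ at mesh $h$ equals that of the unit-mesh scheme on a larger region. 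It therefore cannot tend to $1$ unless the unit-mesh averaged scheme is already exactly $1$-Lipschitz. It is not, even for the Euclidean norm. Take the grid $h\Z^2$ with each square cut by a diagonal, $\nu$ uniform on $[0,h)^2$, and $f$ with the $1$-Lipschitz values $0$, $h$, $(2-\sqrt{2})h$, $(1-\sqrt{2})h$ at $z$, $z+he_1$, $z+h(e_1+e_2)$, $z+he_2$. Write $I_hf=\sum_g\phi_gf(g)$. The gradient at $z$ of $\int I_hf(\cdot+u)\,\mathrm{d}\nu(u)$ is the mean of $\nabla I_hf$ over $z+[0,h)^2$. By the divergence theorem this equals $(1,1-\sqrt{2})$, of norm $\sqrt{4-2\sqrt{2}}>1$, for every $h$.

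Your formula and your verbal description also describe different operators. The formula (replace $\phi_g$ by $\int\phi_g(\cdot+u)\,\mathrm{d}\nu(u)$) interpolates and then averages. The words (average the interpolants of $f$ on the shifted grids $G_h+u$) describe something else: with $\nu$ uniform on a cell, that is exactly convolution of $f$ with the hat function normalised to a probability density. This is $1$-Lipschitz and positive, but not of finite rank. Restricting to finitely many shifts restores finite rank but creates an error governed by the shift spacing relative to $h$, which you do not estimate.

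What is missing is a separation of scales, with the smoothing applied to $f$ \emph{before} interpolation. The paper does this, following \cite{pernecka:smith:15}, with $T=\Lambda SQ$:
\begin{itemize}
\item $Q$ dilates by $1/\alpha$ to make room near the boundary of $K$.
\item $S$ is convolution with a positive smooth kernel $\eta_r$ at a fixed scale $r$. It is positive, it does not increase Lipschitz constants on $\R^N$ for any norm (by translation invariance), and it gives $Sf$ second derivatives of size $O(\Lip(f)/r)$.
\item $\Lambda$ is coordinatewise affine interpolation on the cubes of a grid $\delta\Z^N$ with $\delta\ll r$.
\end{itemize}
Since $Sf$ is nearly affine on each cube, every partial derivative of $\Lambda Sf$ is a convex combination of difference quotients of $Sf$ along nearby edges. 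Hence it lies within $O(\delta/r)\Lip(f)$ of the corresponding partial derivative of $Sf$. This gives the required Lipschitz bound once $\alpha$ is close to $1$ and $\delta/r$ is small. The small parameter is $\delta/r$, not the mesh.

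Each factor is visibly positive, and finite rank comes from $\Lambda$ alone. The predual range is spanned by finitely many averaged elements $Q_*S_*\delta(v)$ rather than Dirac masses. That is harmless, since finitely supported ranges are obtained later via Lemma~\ref{lem:positive_perturbation}. Your shifted-grid idea could be repaired in the same spirit, by averaging over a sub-lattice of shifts of spacing $h/k$ and letting $k\to\infty$. That is again a separation of scales, though, and it needs the corresponding estimate.
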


\begin{proof}[Sketch proof.]
The proof of \cite{pernecka:smith:15}*{Corollary 1.2} requires \cite{pernecka:smith:15}*{Theorem 1.1}, which works by finding, for each $\ep>0$, a dual linear operator $T$ on $\Lip_0(K)$ such that $\n{T} \leq 1+\ep$ and $\n{Tf(x)-f(x)} \leq \ep$ for all $x \in K$ and $f \in B_{\Lip_0(K)}$. The preduals of these operators witness the fact that $\free{K}$ has the MAP. Each $T$ is a composition $T = \Lambda SQ$ of dual operators between Lipschitz spaces over certain metric spaces related to $K$. We need to make minor changes to these operators to ensure that they are positive and apply to Lipschitz spaces over metric spaces corresponding instead to $K^*$.

Fix $L \geq 1$ such that $\frac{1}{L}\n{x} \leq \pn{x}{1},\pn{x}{2} \leq L\n{x}$, $x \in \R^N$. Let $\alpha>\beta>1$, let $B(x,r) \subseteq \R^N$ denote the closed Euclidean ball having centre $x$ and radius $r>0$, and pick $\delta,r>0$ small enough so that $K + B(0,\sqrt{N}\delta) \subseteq \beta K$ and $\beta K + B(0,r) \subseteq \alpha K$ (this is possible by compactness, convexity and the fact that $0$ is an interior point of $K$).

Define $\map{Q}{\Lip_0(K^*)}{\Lip_0((\alpha K)^*)}$ by $Qf(0^*)=0$ and $Qf(x)=f(\frac{x}{\alpha})$ if $x \in \alpha K$. It is immediate that $Q$ is positive, and it is easy to verify that $\n{Q} \leq \frac{1}{\alpha}$ and, using the assumption $\n{x}\leq 1$ whenever $x \in K$,
\[
|Qf(x)-f(x)| \leq \theta(\tfrac{x}{\alpha},x)\n{f} \leq (1-\tfrac{1}{\alpha})\n{f}, \quad x \in K.
\]

On \cite{pernecka:smith:15}*{p.~38}, the operator $S$ is defined via a convolution with a positive smooth function $\eta_r$ supported on $B(0,r)$. Here, we define $\map{S}{\Lip_0((\alpha K)^*)}{\Lip_0((\beta K)^*)}$ by $Sf(0^*)=0$ and
\[
 Sf(x) = \lint{B(0,r)}{}{\eta_r(t)f(x-t)}{t}, \quad x \in \beta K.
\]
Again, it is immediate that $S$ is positive. By adapting elements of the proof of \cite{pernecka:smith:15}*{Lemma 3.2}, we verify that $\n{S} \leq 1+2r$ and $|Sf(x)-f(x)| \leq 2Lr\n{f}$ whenever $x \in \beta K$.

Finally, in \cite{pernecka:smith:15} (specifically equation (12) and pages 42--43), the operator $\Lambda$ is defined via coordinatewise affine interpolation of functions on a lattice of finitely many hypercubes whose vertices belong to a subset of $\delta\Z^N$ and whose union is included in $K+B(0,\sqrt{N}\delta) \subseteq \beta K$. This means that, given $x \in M$, the value of $\Lambda f(x)$ is a finite convex combination of $f(v)$, for some points $v\in\delta\Z^N$. As such, $\Lambda$ is positive and has finite rank. Our operator $\map{\Lambda}{\Lip_0((\beta K)^*)}{\Lip_0(K^*)}$ is defined in the same way, except that we must add $\Lambda f(0^*)=0$ to the definition.

We conclude that the operator $T:=\Lambda SQ$ on $\Lip_0(K^*)$ is positive and has finite rank. Let $\ep>0$. By choosing $\alpha$ close enough to $1$ and $r,\delta$ close enough to $0$, we can ensure that $\n{T} \leq 1+\ep$ and $\theta(Tf(x),f(x))\leq \ep$ whenever $x \in K^*$ and $f \in B_{\Lip_0(K^*)}$. That $\alpha$ and $r$ can be chosen to facilitate this should be clear from above. That $\delta$ can be chosen small enough so that $\n{\Lambda}$ is close enough to $1$, and $\n{\Lambda f(x)-f(x)}$, $x \in K$, is close enough to $0$, is shown in \cite{pernecka:smith:15}*{Lemmas 3.2 and 3.3}.
\end{proof}

The proof of Lemma \ref{th:tree_BAP} also requires us to perturb positive finite-rank operators on free spaces in such a way that the pertubations remain positive. Lemma \ref{lem:positive_perturbation} is the tool we will use to do this. To prove it, we require a result about the integral representation of positive elements of free spaces that follows easily from \cite{aliaga:pernecka:23}*{Corollary 5.8}. 

\begin{lemma}\label{lm:representation}
Let $(M,d)$ be a complete pointed metric space and let $m \in \free{M}$ be positive. Then there is a positive Radon measure $\mu$ on $M\setminus\{0_M\}$ such that $\n{\mu}=\n{m}$ and
\begin{equation}\label{eq:representation}
\da{f}{m} = \lint{M\setminus\{0_M\}}{}{\frac{f(x)}{d(x,0_M)}}{\mu(x)}, \quad f \in \Lip_0(M).
\end{equation}
\end{lemma}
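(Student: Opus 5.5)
The plan is to reduce the statement to the integral representation of positive free-space elements in \cite{aliaga:pernecka:23}*{Corollary 5.8}, and then reweight the measure by the distance to the base point. In the form I intend to use it, that corollary says the following. Let $M$ be complete and $m\in\free{M}$ positive. Then there is a positive Radon measure $\mu$ on $M$ with
\[
 \lint{M}{}{d(x,0_M)}{\mu(x)}<\infty
 \quad\text{and}\quad
 \da{f}{m}=\lint{M}{}{f}{\mu} \quad\text{for all } f\in\Lip_0(M).
\]
Every such $f$ satisfies $|f(x)|\leq \Lip(f)\,d(x,0_M)$, so it is $\mu$-integrable. Completeness of $M$ is exactly what lets $\mu$ live on $M$ itself rather than on some compactification. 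I will check that the precise formulation in \cite{aliaga:pernecka:23} gives this, possibly after discarding a part of the measure carried outside $M$; for positive elements of a free space over a complete space this is the content of their corollary.

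First, I restrict $\mu$ to $M\setminus\{0_M\}$. This changes nothing in the pairing, because $f(0_M)=0$ for every $f\in\Lip_0(M)$. Next I define
\[
 \mathrm{d}\nu(x)=d(x,0_M)\,\mathrm{d}\mu(x) \quad\text{on } M\setminus\{0_M\}.
\]
The density $x\mapsto d(x,0_M)$ is continuous, positive and $\mu$-integrable, so $\nu$ is a finite positive Radon measure. Inner regularity passes from $\mu$ to $\nu$ by dominated convergence applied to compact approximations. For $f\in\Lip_0(M)$ we then have
\[
 \lint{M\setminus\{0_M\}}{}{\frac{f(x)}{d(x,0_M)}}{\nu(x)}=\lint{M\setminus\{0_M\}}{}{f}{\mu}=\da{f}{m},
\]
which is \eqref{eq:representation}.

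It remains to show $\n{\nu}=\n{m}$, and I would do this with two inequalities.
\begin{itemize}
 \item For the upper bound on $\n{m}$: if $\Lip(f)\leq 1$ then $|f(x)|/d(x,0_M)\leq 1$, so $|\da{f}{m}|\leq \nu(M\setminus\{0_M\})=\n{\nu}$. Taking the supremum over such $f$ gives $\n{m}\leq\n{\nu}$.
 \item For the reverse bound: test against $g=d(\cdot,0_M)\in B_{\Lip_0(M)}$. This gives $\n{m}\geq \da{g}{m}=\lint{}{}{1}{\nu}=\n{\nu}$.
\end{itemize}
Together these give $\n{\nu}=\n{m}$, and $\nu$ is the required measure.

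I expect the only genuine obstacle to be the first step: matching the exact hypotheses and conclusion of \cite{aliaga:pernecka:23}*{Corollary 5.8}. In particular I need to confirm that the measure is supported on $M$ (using completeness) and that it represents $m$ against all of $\Lip_0(M)$, not just a dense subclass. If the corollary only gives the representation on bounded or compactly supported Lipschitz functions, I would extend it to all of $\Lip_0(M)$ in two steps. First, truncations $f_k=\max(\min(f,k),-k)$ converge to $f$ pointwise. Second, both sides of the identity are continuous along this sequence: on the measure side by dominated convergence with dominant $\Lip(f)\,d(\cdot,0_M)$, and on the $m$ side by weak$^*$ convergence of the bounded sequence $(f_k)$ to $f$ in $\Lip_0(M)$. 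The remaining steps are routine.
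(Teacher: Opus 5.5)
Your proposal is correct and takes essentially the same route as the paper. Both invoke Aliaga--Perneck\'a's Corollary 5.8, reweight the measure by $\rho=d(\cdot,0_M)$, and get the norm identity by testing against $\rho$; the paper obtains $\n{m}=\da{\rho}{m}$ directly from positivity of $m$, while your two-sided estimate amounts to the same thing. One caveat: the cited corollary only gives an ``almost Radon'' measure (Borel, no mass at $0_M$, Radon on compacta avoiding $0_M$, possibly of infinite mass near $0_M$), so it is not Radon on $M$ as you state. Discarding $0_M$ and reweighting by $\rho$, exactly as you do, is what yields the finite Radon measure, just as in the paper.
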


\begin{proof}
According to \cite{aliaga:pernecka:23}*{Corollary 5.8}, there exists a positive almost Radon measure $\nu$ on $M$ such that 
\[
\da{f}{m} = \lint{M}{}{f(x)}{\nu(x)}, \quad f \in \Lip_0(M),
\]
where `almost Radon' means $\nu$ is Borel, $\nu(\{0_M\})=0$ and $\nu\restrict{K}$ is Radon whenever $K \subseteq M$ is compact and does not contain $0_M$. Define $\rho \in B_{\Lip_0(M)}$ by $\rho(x)=d(x,0_M)$, $x \in M$. Then $\n{m}=\da{\rho}{m}$ as $m$ is positive and $f \leq \rho$ whenever $f \in \Lip_0(M)$. Define a positive Borel measure $\mu$ on $M\setminus\{0_M\}$ by setting $\mathrm{d}\mu = \rho\,\mathrm{d}\nu$. This is a finite Radon measure satisfying $\n{\mu}=\n{m}$ because $M$ is complete and
\[
\mu(M\setminus\{0_M\})=\lint{M\setminus\{0_M\}}{}{}{\mu(x)} = \lint{M\setminus\{0_M\}}{}{\rho(x)}{\nu(x)} = \da{\rho}{m} = \n{m}.
\]
Finally, it is clear that \eqref{eq:representation} is satisfied.
\end{proof}

\begin{lemma}\label{lem:positive_perturbation}
Let $(M,d)$ be a complete pointed metric space, let $\map{S}{\free{M}}{\free{M}}$ be a bounded finite-rank positive linear operator, let $E \subseteq M$ be dense with $0_M \in E$, and let $\ep>0$. Then there exists a finite set $F \subseteq E$ containing $0_M$ and a positive linear operator $\map{T}{\free{M}}{\free{F}}$, such that $\n{T-S} < \ep$.
\end{lemma}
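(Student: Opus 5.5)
Since $S$ has finite rank and is positive, write $S$ through its action on molecules. Pick a basis $m_1,\dots,m_k$ of $\ran S$ and express $S = \sum_{j=1}^k f_j \otimes m_j$ with $f_j \in \Lip_0(M)$. Positivity of the individual $m_j$ is not available, so I will not work with this basis. Instead I will use positivity of $S$ directly through the dual operator $S^*$ on $\Lip_0(M)$.

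The plan is to build a positive "discretisation" operator $\map{D}{\free{M}}{\free{F}}$ that is close to the identity on the finite-dimensional space $\ran S$. Then I set $T = DS$. Positivity of $T$ is automatic, since $D$ and $S$ are both positive and compositions of positive operators are positive. Moreover $\n{T-S} \leq \n{D\restrict{\ran S} - I}\,\n{S}$. So it suffices to find, for given $\ep'>0$, a finite $F \subseteq E$ containing $0_M$ and a positive bounded $D$ with $\n{Dm - m} \leq \ep'\n{m}$ for $m \in \ran S$. A single bounded $D$ suffices; no uniform norm bound is needed.

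To construct $D$, I will use Lemma \ref{lm:representation}. First I write $\ran S$ as spanned by finitely many positive elements. This is possible because $\ran S = \aspan\set{S\delta(x)}{x\in M}$ and each $S\delta(x)\geq 0$, so finitely many $p_1,\dots,p_k$ of the form $S\delta(x_j)$ form a basis. Each $p_j$ is represented by a finite positive Radon measure $\mu_j$ on $M\setminus\{0_M\}$ via \eqref{eq:representation}. By inner regularity, choose a compact $C \subseteq M\setminus\{0_M\}$ carrying all but $\ep''$ of each $\mu_j$. Cover $C$ by finitely many small Borel pieces $B_1,\dots,B_n$ of diameter $<\ep''\cdot\operatorname{dist}(C,0_M)$, and pick points $e_i\in E\cap B_i$ using density of $E$. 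Then define
\[
 q_j = \sum_i \int_{B_i}\frac{1}{d(x,0_M)}\,\mathrm{d}\mu_j(x)\; d(e_i,0_M)\,\delta(e_i) \in \free{F},
\]
with $F=\{0_M,e_1,\dots,e_n\}$. Each $q_j$ is positive. Testing against $f\in B_{\Lip_0(M)}$, the estimate $|f(x)/d(x,0_M) - f(e_i)/d(e_i,0_M)|\lesssim d(x,e_i)/d(x,0_M)$ is controlled on $C$, and the tail is controlled by $\ep''$. This gives $\n{q_j - p_j}$ small. Finally I define $D$ on $\free{M}$ by $D = \sum_j g_j\otimes q_j$, where $g_j\in\Lip_0(M)$ is the coordinate functional of $p_j$ on $\ran S$. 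This does not make $D$ positive, so I instead define $T$ directly: $T\delta(x) := \sum_j g_j(S\delta(x)) q_j$.

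Making $T$ positive is where I expect the main obstacle. The coefficients $g_j(S\delta(x))$ need not be nonnegative. The fix I would try is to discretise $S\delta(x)$ itself, uniformly in $x$, rather than discretise a basis. Consider the map $\map{\Phi}{\ran S\cap\{m\geq0\}}{\free{F}}$ sending a positive $m$ with measure $\mu_m$ to $\sum_i \mu_m(B_i)\,\frac{d(e_i,0_M)}{\ldots}\delta(e_i)$, i.e. the formula above. This formula is linear in the measure. The measure representation of a positive element should be unique, since $\Lip_0$ functions separate finite Radon measures on $M\setminus\{0_M\}$ weighted by $1/d(\cdot,0_M)$, so the assignment $m\mapsto\mu_m$ is additive on the positive cone. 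Hence $\Phi$ extends linearly to $\ran S$, which is spanned by its positive cone. Then set $T = \Phi\circ S$, which is positive because $\Phi$ maps positive elements to positive elements. For the estimate, I need the compact $C$ and the partition to work uniformly for all $\mu_m$ with $m$ in the positive part of $S(B_{\free M})$. This is where finite-dimensionality enters. Every such $m$ is a combination of $p_1,\dots,p_k$ with bounded coefficients. However, positive $m$ need not be dominated by $\sum_j\mu_j$ when the coefficients have mixed signs. I would therefore replace $p_j$ by a positive basis and argue that the cone of positive elements of $\ran S$ is a closed finite-dimensional cone. Its compact base has measures that are uniformly tight by compactness and weak continuity of $m\mapsto\mu_m$, and tightness is what the partition needs. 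Combining this with \eqref{eqn:operator_norm}, which gives $\n{T-S} = \sup_{x\neq y}\n{(\Phi-I)S m_{xy}}$, and writing $S m_{xy}$ as a difference of positive parts of controlled norm within the finite-dimensional cone, yields $\n{T-S}<\ep$.
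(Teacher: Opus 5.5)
Your argument ends up where the paper's does. You discretise the representing measures $\mu_j$ of the positive spanning elements $p_j=S\delta(x_j)$ by a formula linear in the measure. Positivity then follows because the representing measure of $S\delta(x)=\sum_j \da{g_j}{S\delta(x)}p_j$ is $\sum_j \da{g_j}{S\delta(x)}\mu_j$, which must be a positive measure.

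\textbf{The obstacle you expected is not there.} The operator you first wrote down, $T\delta(x)=\sum_j \da{g_j}{S\delta(x)}q_j$, is literally $\Phi\circ S$ for the same partition and points, so it is already positive. Negative coefficients $\da{g_j}{S\delta(x)}$ do no harm. For each piece $B_i$ the combined weight $\sum_j \da{g_j}{S\delta(x)}\mu_j(B_i)$ is the value on $B_i$ of a positive measure, hence nonnegative. This is exactly the paper's argument, with $f_i=S^*g_i$ in its notation.

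\textbf{The last part of your proposal is therefore unnecessary.} You do not need uniform tightness over the positive cone (justified only by an unproved `weak continuity' of $m\mapsto\mu_m$), nor a splitting of $Sm_{xy}$ into positive pieces of controlled norm. Since
\[
T-S=\sum_j (S^*g_j)\otimes(q_j-p_j), \qquad \n{T-S}\leq\sum_j\n{S^*g_j}\,\n{q_j-p_j},
\]
only the finitely many $\mu_j$ need to be tight, and inner regularity gives that directly. Your detour could be completed, since the positive cone of $\ran S$ is closed and generating in finite dimensions. Your worry that a positive $m\in\ran S$ is not dominated is also unfounded: if $m=\sum_j a_jp_j$ then $\mu_m=\sum_j a_j\mu_j\leq\sum_j|a_j|\mu_j$. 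The $p_j$ were a positive basis from the start, so there was nothing to replace.

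\textbf{Two slips need fixing.}
\begin{enumerate}
\item The weight in $q_j$ has a spurious factor. With your formula, $\da{f}{q_j}\approx\sum_i\mu_j(B_i)f(e_i)$, whereas $\da{f}{p_j}\approx\sum_i\mu_j(B_i)f(e_i)/d(e_i,0_M)$. Take $q_j=\sum_i\mu_j(B_i)\,d(e_i,0_M)^{-1}\delta(e_i)$ as the paper does, or simply delete the factor $d(e_i,0_M)$ from your expression. The same correction applies to the unfinished formula for $\Phi$.
\item A Borel piece $B_i$ may have empty interior, so $E\cap B_i$ can be empty. Instead pick $y_i\in B_i$ and then $e_i\in E$ very close to $y_i$, and control $|f(x)/d(x,0_M)-f(e_i)/d(e_i,0_M)|$ through $d(x,y_i)+d(y_i,e_i)$, as in the paper.
\end{enumerate}
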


\begin{proof}
As $S\free{M}$ is finite-dimensional, there exist distinct $x_1,\ldots,x_n \in M\setminus\{0_M\}$ such that $S\free{M} = \aspan\set{S\delta(x_i)}{1 \leq i \leq n}$. Pick $f_i \in \Lip_0(M)$ such that $\da{f_i}{S\delta(x_j)} = \delta_{i,j}$ and
\[
 Sm = \sum_{i=1}^n \da{f_i}{m}S\delta(x_i), \quad m \in \free{M}.
\]
Fix $L>\sum_{i=1}^n \n{f_i},\max\set{\n{S\delta(x_i)}}{1\leq i \leq n}$. Because $M$ is complete and $S\delta(x_i)$ is positive for $1 \leq i \leq n$, by Lemma \ref{lm:representation} there exist positive Radon measures $\mu_i$ on $M\setminus\{0_M\}$ such that $\n{\mu_i}=\n{S\delta(x_i)}<L$ and
\[
\da{f}{S\delta(x_i)} = \lint{M\setminus\{0_M\}}{}{\frac{f(x)}{d(0_M,x)}}{\mu_i(x)}, \quad f \in \Lip_0(M).
\]
By inner regularity of the $\mu_i$, there exists a compact subset $K\subseteq M\setminus\{0_M\}$ such that $\mu_i(A) < \frac{\ep}{2L}$ for all $i$, where $A:=M\setminus(K \cup \{0_M\})$. Fix $r>0$ such that $d(0_M,x) \geq r$ whenever $x \in K$. It is straightforward to check that, given $f \in B_{\Lip_0(M)}$, the map $x \mapsto f(x)/d(0_M,x)$ is $\frac{2}{r}$-Lipschitz on $K$.

By compactness, there is a partition of $K$ into non-empty Borel subsets $B_1,\ldots,B_k$, all having diameter at most $\frac{r\ep}{8L^2}$. Pick $y_j \in B_j$ and then $z_j \in E$ such that $d(z_j,y_j) < \frac{r\ep}{8L^2}$ for $1\leq j \leq k$. Set $F=\{0_M,z_1,\ldots,z_k\}$, let
\[
 m_i = \sum_{j=1}^k \frac{\mu_i(B_j)}{d(0_M,z_j)}\delta(z_j) \in \free{F}, \quad 1 \leq i \leq n,
\]
and define
\[
 Tm = \sum_{i=1}^n \da{f_i}{m}m_i, \quad m \in \free{M}.
\]

Given $f \in \Lip_0(M)$, having in mind that
\[
\bigg|\frac{f(x)}{d(0_M,x)}-\frac{f(z_j)}{d(0_M,z_j)} \bigg| \leq \frac{2}{r}d(x,z_j) \leq \frac{2}{r}(d(x,y_j)+d(y_j,z_j)) < \frac{\ep}{2L^2}, \quad x \in B_j,
\]
we estimate
\begin{align*}
 |\da{f}{S\delta(x_i)-m_i}| &\leq \bigg|\lint{A}{}{\frac{f(x)}{d(0_M,x)}}{\mu_i(x)} \bigg|+\bigg|\lint{K}{}{\frac{f(x)}{d(0_M,x)}}{\mu_i(x)} - \sum_{j=1}^k \frac{\mu_i(B_j)}{d(0_M,z_j)}f(z_j) \bigg|\\
 &< \frac{\ep}{2L} + \sum_{j=1}^k \lint{B_j}{}{\bigg|\frac{f(x)}{d(0_M,x)} - \frac{f(z_j)}{d(0_M,z_j)}\bigg|}{\mu_i(x)}\\
 &\leq \frac{\ep}{2L} + \sum_{j=1}^k \mu_i(B_j)\frac{\ep}{2L^2} \leq \frac{\ep}{L}.
\end{align*}
It follows that $\n{S\delta(x_i)-m_i} \leq \frac{\ep}{L}$ for $1 \leq i \leq n$ and therefore $\n{T-S} < \ep$.

It remains to verify that $T$ is positive. Let $x \in M$. Because $S\delta(x)$ is positive, for every positive $f \in \Lip_0(M)$,
\begin{align*}
 0 \leq \da{f}{S\delta(x)} = \sum_{i=1}^n f_i(x)\da{f}{S\delta(x_i)} = \lint{M\setminus\{0_M\}}{}{\frac{f(y)}{d(0_M,y)}}{\mu(y)},
\end{align*}
where $\mu := \sum_{i=1}^n f_i(x)\mu_i$. Hence $\mu$ is a positive Radon measure on $M\setminus\{0_M\}$. In particular,
\[
 \sum_{i=1}^n f_i(x)\mu_i(B_j) \geq 0, \quad 1 \leq j \leq k.
\]
Therefore,
\[
 T\delta(x) = \sum_{i=1}^n f_i(x)m_i = \sum_{j=1}^k \frac{1}{d(0_M,z_j)}\bigg(\sum_{i=1}^n f_i(x) \mu_i(B_j)\bigg)\delta(z_j)
\]
is positive.
\end{proof}

The next lemma establishes an estimate involving positive elements of certain finite free spaces. 

\begin{lemma}\label{lem:est}
Let $(M,d)$ be a pointed metric space, let $b,c>0$ be constants, let $A \subseteq M$ be a finite set such that $0_M \in A$, and let $\map{\phi}{A}{\{0_M\} \cup (M\setminus A)}$ be an injection satisfying $\phi(0_M)=0_M$, and $b \leq d(0_M,x)$ and $d(x,\phi(x)) \leq c$ whenever $x \in A\setminus\{0_M\}$. Moreover, let $\map{I}{\free{A}}{\free{M}}$ be the identity embedding and let $\map{J}{\free{A}}{\free{M}}$ be the unique bounded linear map satisfying $J(\delta(x)) = \delta(\phi(x))$, $x \in A$. Then
 \[
  \n{(I-J)m} \leq \frac{c}{b}\n{m}
 \]
whenever $m \in \free{A}$ is positive.
\end{lemma}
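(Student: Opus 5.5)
Since $A$ is finite, every $m\in\free{A}$ has a unique expansion $m=\sum_{x\in A\setminus\{0_M\}} a_x\delta(x)$. The plan is to first show that positivity of $m$ forces $a_x\geq 0$ for every $x$, and then to estimate $(I-J)m$ by the triangle inequality.

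\textbf{Step 1: positivity gives nonnegative coefficients.} Fix $x\in A\setminus\{0_M\}$. Let $\ep>0$ be smaller than the distance from $x$ to every other point of $A$ (in particular smaller than $d(x,0_M)$). Define
\[
f(y)=\max\{0,\ep-d(x,y)\}, \quad y \in A.
\]
This $f$ is a nonnegative $1$-Lipschitz function on $A$ with $f(0_M)=0$, so it belongs to $\Lip_0(A)$. For any $g\in\Lip_0(M)$ with $g\geq 0$, the pairing $\da{g}{m}$ depends only on $g\restrict{A}$. Conversely, a nonnegative element of $\Lip_0(A)$ extends to a nonnegative element of $\Lip_0(M)$: take a McShane extension and then its positive part, which keeps the Lipschitz constant and still vanishes at $0_M$. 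Hence positivity of $m$ in $\free{M}$ is the same as positivity in $\free{A}$. Testing against $f$ gives $\da{f}{m}=a_x\ep\geq 0$, so $a_x\geq 0$. (Alternatively, Lemma \ref{lm:representation} applied on the finite set $A$ gives this directly.)

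\textbf{Step 2: estimate $\n{m}$ from below.} Let $\rho(y)=d(y,0_M)$, which lies in $B_{\Lip_0(M)}$. Then
\[
\n{m}\geq\da{\rho}{m}=\sum_{x} a_x d(x,0_M)\geq b\sum_x a_x .
\]

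\textbf{Step 3: estimate $\n{(I-J)m}$ from above.} Since $\phi(0_M)=0_M$ and $\delta(0_M)=0$, we have
\[
(I-J)m=\sum_{x\in A\setminus\{0_M\}} a_x\bigl(\delta(x)-\delta(\phi(x))\bigr).
\]
Because $\delta$ is an isometric embedding and each $a_x\geq 0$, the triangle inequality gives
\[
\n{(I-J)m}\leq \sum_x a_x\, d(x,\phi(x))\leq c\sum_x a_x .
\]
Combining this with Step 2 yields $\n{(I-J)m}\leq \frac{c}{b}\n{m}$.

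Injectivity of $\phi$ and the condition $\phi(x)\notin A$ are not needed for this bound; they only matter for how $J$ is used later. The one point needing care is Step 1, specifically that positivity tested against $\Lip_0(M)$ transfers to positivity of the coefficients in $\free{A}$. The positive-part-of-McShane-extension argument should settle this.
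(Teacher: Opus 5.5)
Your proof is correct and is essentially the paper's argument: nonnegativity of the coefficients, the lower bound $\n{m}\geq\da{\rho}{m}\geq b\sum_x a_x$, and the triangle-inequality bound $\n{(I-J)m}\leq c\sum_x a_x$. The only difference is that you justify why the $a_x$ are nonnegative, which the paper simply asserts; your bump function $\max\{0,\ep-d(x,\cdot)\}$ is already a nonnegative element of $\Lip_0(M)$, so the McShane step is not even needed.
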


\begin{proof}
Let $m\in\free{A}$ be positive. As $A$ is finite, there exist constants $a_i \neq 0$ and distinct $x_i \in A\setminus\{0_M\}$, $1 \leq i \leq n$, such that $m = \sum_{i=1}^n a_i \delta(x_i)$. Because $m$ is positive, the $a_i$ must be positive. By considering again the 1-Lipschitz map $\rho:x \mapsto d(0_M,x)$, we obtain $\sum_{i=1}^n a_i d(0_M,x_i) \leq \n{m}$ (it is not hard to see that we have equality). Therefore
\[
 (I-J)m = \sum_{i=1}^n a_i(\delta(x_i)-\delta(\phi(x_i))) = \sum_{i=1}^n a_id(x_i,\phi(x_i))m_{x_i\phi(x_i)},
\]
giving
\[
 \n{(I-J)m} \leq \sum_{i=1}^n a_i d(x_i,\phi(x_i)) \leq c\sum_{i=1}^n a_i \leq \frac{c}{b}\n{m}. \qedhere
\]
\end{proof}

Now we present our final lemma before the proof of Theorem \ref{th:pel_complete-analytic}, which follows immediately afterwards.

\begin{lemma}\label{th:tree_BAP} The free space $\free{T,\eta}$ has the 2-BAP whenever $T$ is a tree over $\frac{1}{2}B_\pel$.
\end{lemma}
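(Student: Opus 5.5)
The plan is to build, for given points $t_1,\ldots,t_k \in T$ and $\ep>0$, a finite-rank operator on $\free{T,\eta}$ of norm at most $2$ (plus $\ep$) that nearly fixes each $\delta(t_i)$. Since the $\delta(t)$ span a dense subspace, it suffices to handle finitely many nodes. Fix $n$ larger than every $|t_i|$ and with $2^{-n}$ small compared with $\ep$. Split $T$ into the finite part $T_{\leq n}=T\cap\omega^{\leq n}$ and the clusters $C_u=\set{t\in T}{u\preccurlyeq t,\ |t|>n}$ for $u\in T$ with $|u|=n$.

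Two observations about a cluster $C_u$ drive the argument.
\begin{enumerate}[label=(\roman*)]
\item For $s,t\in C_u$ we have $\rho(s,t)\leq 2^{-n-1}$. Hence, up to an additive error of $2^{-n}$, $\eta$ on $C_u$ is just $\theta(x_{|s|},x_{|t|})$.
\item The distance $\eta(u,t)$ equals $2^{-n-1}+\theta(x_n,x_{|t|})$, up to the $\rho$-terms between levels.
\end{enumerate}
So $C_u\cup\{u\}$, with base point $u$, looks like a copy of a dense subset of $(\tfrac12 B_\pel)^*$ after rescaling by $2^{-n}$. The point $u$ plays the role of $0^*$, and the extra constant distance plays the role of the $\tfrac12$ in the definition of $M^*$. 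The scaling and base-point normalisation still need checking; if the constant does not match exactly, I would alter the cluster maps by a bounded Lipschitz distortion. Because the sequence $(x_i)$ is dense with infinite repetitions, the heights of nodes in $C_u$ realise a dense set $E$ of points, provided $C_u$ has arbitrarily long branches. Clusters with only finitely many nodes are finite and are simply treated exactly, together with $T_{\leq n}$.

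The construction of the operator proceeds as follows.
\begin{enumerate}[label=(\arabic*)]
\item Define a Lipschitz map $\pi_u$ from $C_u\cup\{u\}$ to $(\tfrac12B_\pel)^*$, sending $u\mapsto 0^*$ and $t\mapsto x_{|t|}$.
\item Apply the positive-MAP operator $S$ from Lemma \ref{lm:M*} to the finitely many relevant points.
\item Use Lemma \ref{lem:positive_perturbation} to replace $S$ by a positive operator $S'$ with range in $\free{F}$, where $F$ is a finite subset of the dense set $E$.
\item Lift $F$ back into $C_u$ by an injection $\phi$ choosing nodes whose heights carry the prescribed points. Choose these nodes deep enough that they are pairwise distinct and the $\rho$-errors are small.
\end{enumerate}
The lift $J$ is then compared with the isometric copy $I$ of $\free{F}$ via Lemma \ref{lem:est}. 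This comparison is valid only on positive elements, which is exactly why $S'$ must be positive: it gives $\n{(I-J)m}\leq (c/b)\n{m}$ with $c/b$ small. The level-$\leq n$ part is kept as the identity on $\free{T_{\leq n}}$; it is finite-dimensional, so this is already finite rank. The operator on the whole space is then the sum, over the finitely many relevant $u$, of these cluster operators, together with the map sending the remaining clusters to their roots $u$. Only finitely many clusters can carry non-negligible mass for the given data, which keeps the operator finite rank.

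The main obstacle I expect is the norm estimate, which I would organise with \eqref{eqn:operator_norm}. One must check $\n{Tm_{st}}\leq 2+O(\ep)$ for molecules $m_{st}$ of three kinds:
\begin{itemize}
\item molecules inside a single cluster, where the positive-MAP bound of $1$ plus the perturbation errors should suffice;
\item molecules between $T_{\leq n}$ and a cluster;
\item molecules between two different clusters $C_u$, $C_{u'}$.
\end{itemize}
For the last two kinds I would write $Tm_{st}$ as a telescoping sum through the roots $u,u'$. The idea is that $\eta(s,t)\geq \eta(u,u')$, up to height terms, because $\rho(s,t)=\rho(u,u')$ by Lemma \ref{lm:subtree_retraction}. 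The difficulty is that the height terms $\theta(x_{|s|},x_{|t|})$ need not dominate $\theta(x_n,\cdot)$. I expect this to produce a triangle-inequality loss of one extra unit, which is where the constant $2$ rather than $1$ comes from. I would first attempt this estimate with the cluster operators written as $\delta(u)+(\text{positive part})$, and bound the two pieces separately.
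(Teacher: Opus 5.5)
\textbf{There is a genuine gap.} The two problems below are structural, and each on its own stops the argument.

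\emph{The operator is not finite rank, and it is not uniformly Lipschitz.} Trees in $\Tr$ may be infinitely branching (e.g.\ $T=\omega^{<\omega}\in\IF$). Then $T_{\leq n}=T\cap\omega^{\leq n}$ is infinite, and so is the set of roots $u$ with $|u|=n$. Both the identity on $\free{T_{\leq n}}$ and the map collapsing the remaining clusters onto their roots therefore have infinite rank. Saying that only finitely many clusters carry non-negligible mass for the data does not help, because rank and norm are properties of the operator on all of $\free{T}$. Your dichotomy ``finitely many nodes'' versus ``arbitrarily long branches'' also omits infinite clusters of bounded height.

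More seriously, your maps do not respect heights, and $\eta$ sees heights through $\theta(x_{|s|},x_{|t|})$. Take a collapsed cluster $C_u$ of unbounded height, let $s$ be the parent of $u$, and pick $t\in C_u$ with $x_{|t|}=x_{n-1}$. Then $\eta(s,t)=2^{-n}$, but $\eta(s,u)=2^{-n}+\theta(x_{n-1},x_n)$, so collapsing costs a factor $1+2^n\theta(x_{n-1},x_n)$. Likewise $\pi_u$ is not uniformly Lipschitz. For $t\in C_u$ with $x_{|t|}=x_n$ we have $\eta(u,t)=2^{-n-1}$ but $\theta(0^*,x_n)\geq\frac12$, so $\Lip(\pi_u)\geq 2^n$. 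No distortion bounded independently of $n$ repairs this, and ``rescaling by $2^{-n}$'' would also rescale the $\theta$-part. The constant $\frac12$ in $M^*$ matches $\rho(\varnothing,t)=\frac12$ at the \emph{global} root, not $2^{-n-1}$ at local roots. The positive-MAP of Lemma \ref{lm:M*} is available only for the former.

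\emph{The missing ideas are the ones the paper uses.}
\begin{itemize}
\item Replace levels by a finite subtree $S$ containing the data. Split $T$ into the finitely many pieces $T_r$, $r\in S$, given by the fibres of the retraction $v$ of Lemma \ref{lm:subtree_retraction}; this keeps $s\wedge t$ unchanged across pieces.
\item On a piece of bounded height, use the height-preserving $1$-Lipschitz retraction $t\mapsto u_r\restrict{|t|}$ onto a deepest branch.
\item On a piece of unbounded height, use $H_rGW$. Here $W$ is induced on all of $T$ by $\varnothing\mapsto 0^*$, $t\mapsto x_{|t|}$; it is $1$-Lipschitz exactly because $\eta(\varnothing,t)=\frac12+\theta(x_0,x_{|t|})=\theta(0^*,x_{|t|})$. $G$ is positive and finite rank (Lemmas \ref{lm:M*} and \ref{lem:positive_perturbation}). $H_r$ sends $0^*\mapsto\varnothing$ and $x_i\mapsto u_r\restrict{\pi(i)}$ with $x_{\pi(i)}=x_i$, so it restores heights.
\end{itemize}

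\emph{Lemma \ref{lem:est} is also misplaced.} It compares two maps into the same free space, so it cannot compare $\free{F}\subseteq\free{(\frac12B_\pel)^*}$ with its lift in $\free{T}$. In the paper it handles two distinct unbounded pieces $r\neq r'$. There $H_{r'}=JH_r$, and each lifted point moves by $\eta$-distance $2^{-|s\wedge t|-1}$ while staying at distance at least $\frac12$ from $\varnothing$. Hence
\[
\n{(H_r-H_{r'})GW\delta(s)}\leq 2^{-|s\wedge t|}\n{H_rGW\delta(s)}\leq(1+2^{-n})^2\,2^{-|s\wedge t|},
\]
using that $H_rGW\delta(s)$ is positive and $\n{\delta(s)}\leq1$. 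Here $c/b=2\rho(s,t)$ is not small: this is exactly where the constant $2$ comes from. Positivity is needed because cancellation in a non-positive $GW\delta(s)$ would leave $\n{(I-J)m}$ uncontrolled by $\n{m}$. Telescoping through the roots $u,u'$ cannot replace this step, since, as you noticed, $\eta(s,u)$ is not controlled by $\eta(s,t)$.
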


\begin{proof}
Let $T$ be a tree over $\frac{1}{2}B_\pel$. Let $\theta$ be the metric on $\frac{1}{2}B_\pel$ inherited from the canonical norm on $\pel$. Recall that we require a dense sequence $(x_i)_{i\in\omega}$ in $\frac{1}{2}B_\pel$ whose elements repeat infinitely often and which satisfies $x_0=0$. Let $S \subseteq T$ be a finite set and let $n \in \omega$. We will construct a finite-rank operator $\map{K}{\free{T}}{\free{T}}$ such that $\n{K} \leq 2(1+2^{-n})^2$ and $K\delta(s) = \delta(s)$ whenever $s \in S$.
 
By taking the downwards-closure of $S$ if necessary, we shall assume, without loss of generality, that $S$ is a finite subtree of $T$. Let $\map{v}{T}{S}$ be the map from Lemma \ref{lm:subtree_retraction}. Of course, the fibres $v^{-1}(r)$, $r \in S$, partition $T$. Let the subtree $T_r$, $r \in S$, be the downwards-closure of $v^{-1}(r)$, which is the union of $v^{-1}(r)$ and the set of predecessors of $r$. It will be useful to have in mind that $t \in T_r$ and $|t| \geq |r|$ implies $v(t)=r$. Define
\[
 I = \set{r \in S}{\sup\set{|t|}{t \in T_r} < \infty}.
\]
In order to define $K$, we must set up some linear maps, two of which will depend on $r \in S$ and whether $r \in I$ or not.

First, assume that $r \in I$. As $\sup\set{|t|}{t \in T_r} < \infty$, there exists $u_r \in T_r$, which we fix from now on, such that $|t| \leq |u_r|$ for all $t \in T_r$. Define $\map{z_r}{T_r}{T_r}$ by $z_r(t)=u_r\restrict{|t|}$. The map $z_r$ is a $1$-$\eta$-Lipschitz retraction onto the set of predecessors of $u_r$. Indeed, for $s \neq t$,
\begin{align*}
 \eta(u_r\restrict{|s|},u_r\restrict{|t|}) &\leq 2^{-\min\{|s|,|t|\}-1} + \theta(x_{|s|},x_{|t|})\\
 &\leq 2^{-|s \wedge t|-1} + \theta(x_{|s|},x_{|t|}) = \eta(s,t).
\end{align*}
By the universal property, there exists a linear map $\map{Z_r}{\free{T_r}}{\free{T_r}}$ satisfying $\n{Z_r} = 1$ and $Z_r\delta(t)=\delta(z_r(t))$ whenever $t \in T_r$.

We will need three linear maps to take care of any points $r \in S\setminus I$. The first two are independent of $r$. Define $\map{w}{(T,\eta)}{((\frac{1}{2}B_\pel)^*,\theta)}$ by $w(\varnothing)=0^*$ and $w(t)=x_{|t|}$ if $t \neq \varnothing$. Let $s,t \in T\setminus\{\varnothing\}$. Clearly $\theta(w(s),w(t)) \leq \eta(s,t)$, and
\[
 \theta(w(\varnothing),w(t)) = \theta(0^*,x_{|t|}) = \tfrac{1}{2}+\theta(0,x_{|t|}) = 2^{-1}+\theta(x_0,x_{|t|})= \eta(\varnothing,t).
\]
Therefore $w$ is a $1$-Lipschitz map that preserves the base points, so again by the universal property there exists a linear map $\map{W}{\free{T}}{\free{(\frac{1}{2}B_\pel)^*}}$ satisfying $\n{W} \leq 1$ and $W\delta(t)=\delta(w(t))$ whenever $t \in T$.

Next, as $S$ is finite, we can assume, without loss of generality, that $n > |s|$ whenever $s \in S$ and $n > |u_r|$ whenever $r \in I$. By Lemmas \ref{lm:M*} and \ref{lem:positive_perturbation}, there exists $p \geq n$ and a positive finite-rank operator $\map{G}{\free{(\frac{1}{2}B_\pel)^*}}{\free{E^*}}$, where $E=\set{x_i}{i<p}$, such that $\n{G} \leq 1+2^{-n}$ and $\n{\delta(x_i) - G\delta(x_i)}\leq 2^{-2n}$ whenever $i<n$.

Now we define our fourth type of linear map. Given $i,j < p$, let $i \sim j$ if $x_i = x_j$, and let $F \subseteq p$ be the set of least elements of all equivalence classes of $\sim$. Set
\[
\ep = \min\set{d(x_i,x_j)}{i,j \in F,\, i \neq j}>0,
\]
and let $\map{\pi}{F}{\omega}$ be a map such that $n \leq \pi(i)$, $2^{-\pi(i)-1} \leq 2^{-n}\ep$ and $x_{\pi(i)}=x_i$ whenever $i \in F$. The existence of such a map, which is necessarily an injection, follows from the assumption that the $x_i$ repeat infinitely often.

Fix $q=\max\ran\pi + 1$. Given $r \in S\setminus I$, because $\sup\set{|t|}{t \in T_r}=\infty$, there exists $u_r \in T_r$ such that $|u_r|=q$. Define $\map{h_r}{(E^*,\theta)}{(T_r,\eta)}$ by
\[
h_r(x) = \begin{cases}
\varnothing & \text{if }x=0^*,\\
u_r\restrict{\pi(i)} & \text{if $x=x_i$ for some (unique) $i \in F$}.
\end{cases}
\]
We verify that $\Lip(h_r) \leq 1+2^{-n}$. Indeed, given distinct $i,j \in F$,
\begin{align*}
\eta(h_r(x_i),h_r(x_j))=\eta(u_r\restrict{\pi(i)},u_r\restrict{\pi(j)}) &= 2^{-\min\{\pi(i),\pi(j)\}-1} + \theta(x_{\pi(i)},x_{\pi(j)})\\
&\leq 2^{-n}\ep + \theta(x_i,x_j) \leq (1+2^{-n})\theta(x_i,x_j),
\end{align*}
and
\[
 \eta(h_r(0^*),h_r(x_i))=\eta(\varnothing,u_r\restrict{\pi(i)}) = 2^{-1}+\theta(x_0,x_{\pi(i)}) = \theta(0^*,x_i).
\]

Once more by the universal property, there exists a linear map $\map{H_r}{\free{E^*}}{\free{T_r}}$ satisfying $\n{H_r} \leq 1+2^{-n}$ and $H_r\delta(x_i)=\delta(h_r(x_i))=\delta(u_r\restrict{\pi(i)})$ whenever $i \in F$.

Now define a finite-rank operator $\map{K}{\free{T}}{\free{T}}$ by first setting
\[
K\delta(t) = \begin{cases}
\delta(t) & \text{if }t \in S,\\
Z_r\delta(t) & \text{if }t \in T_r\setminus S \text{ and } r \in I,\\
H_rGW\delta(t) & \text{if }t \in T_r\setminus S \text{ and } r \in S\setminus I,
\end{cases}
\]
and then extending to $\free{T}$ by linearity (recall that the $\delta(t)$, $t \neq \varnothing$, are linearly independent) and continuity (eventually, once \eqref{eqn:inequality_we_need} below has been established). To show that $\n{K} \leq 2(1+2^{-n})^2$, by \eqref{eqn:operator_norm} it suffices to prove
\begin{equation}\label{eqn:inequality_we_need}
\n{K\delta(s)-K\delta(t)} \leq 2(1+2^{-n})^2\eta(s,t),
\end{equation}
whenever $s,t \in T$. Establishing \eqref{eqn:inequality_we_need} will involve a number of cases.

First we assume $s \in S$. If $t \in S$ as well then obviously \eqref{eqn:inequality_we_need} holds. Instead, let $t \in T_r\setminus S$ for some $r \in S$. Then $|t| > |r|$ (else $t \prec r$ and $t \in S$ as $S$ is a subtree) so $v(t)=r$. By Lemma \ref{lm:subtree_retraction}, $s \wedge t \in S$ implies $s \wedge t = v(s) \wedge v(t) = s \wedge r$. We need to distinguish the cases $r \in I$ and $r \in S\setminus I$.

Assume $r \in I$. Then, because $z_r(t) \in T_r$ and $|z_t(t)|=|t| > |r|$, as above we get $v(z_r(t))=r$ and $s \wedge z_r(t) = v(s) \wedge v(z_r(t)) = s \wedge r = s \wedge t$. Using this fact we obtain
\begin{align*}
 \n{K\delta(s)-K\delta(t)} = \n{\delta(s)-\delta(z_r(t))} &= \eta(s,z_r(t))\\
 &= 2^{-|s\wedge z_r(t)|-1} + \theta(x_{|s|},x_{|z_r(t)|})\\
 &= 2^{-|s\wedge t|-1} + \theta(x_{|s|},x_{|t|}) = \eta(s,t).
\end{align*}

Assume instead that $r \in S\setminus I$. First we dispense with the case $s=\varnothing$. In this scenario,
\[
 \n{K\delta(s)-K\delta(t)} = \n{K\delta(t)} \leq (1+2^{-n})^2\n{\delta(t)} = (1+2^{-n})^2\eta(s,t).
\]
Now assume $s \in S\setminus\{\varnothing\}$, so that $W\delta(s)=\delta(w(s)) = \delta(x_{|s|})$. As $|s| < n$ by assumption and $n \leq p$, there exists $i \in F$ such that $x_{|s|}=x_i$, thus
\begin{equation}\label{H_r_equality}
H_r\delta(x_{|s|}) = H_r\delta(x_i) = \delta(h_r(x_i)) = \delta(u_r\restrict{\pi(i)}).
\end{equation}
Because $h_r(x_i) \in T_r\setminus S$, as above, $s \wedge h_r(x_i) = s \wedge r = s \wedge t$ and hence
\begin{equation}\label{metric_r_est_2}
\eta(s,h_r(x_i)) = 2^{-|s \wedge h_r(x_i)|-1} + \theta(x_{|s|},x_{\pi(i)}) = 2^{-|s \wedge t|-1}.
\end{equation}

Using \eqref{H_r_equality}, \eqref{metric_r_est_2} and the fact that $K\delta(t)=H_rG\delta(x_{|t|})$, we can estimate
\begin{align}
& \n{K\delta(s)-K\delta(t)} \nonumber\\
=\;& \n{\delta(s)- H_rG\delta(x_{|t|})} \nonumber\\
\leq\;& \n{\delta(s)-H_r\delta(x_{|s|})} + \n{H_r\delta(x_{|s|})-H_rG\delta(x_{|s|})} + \n{H_rG\delta(x_{|s|})-H_rG\delta(x_{|t|})} \nonumber\\
\leq\;& \eta(s,h_r(x_i)) + \n{H_r}\|\delta(x_{|s|})-G\delta(x_{|s|})\| + \n{H_r}\n{G}\theta(x_{|s|},x_{|t|}) \nonumber\\
\leq\;& 2^{-|s \wedge t|-1} + (1+2^{-n})2^{-2n}+(1+2^{-n})^2\theta(x_{|s|},x_{|t|}) \nonumber\\
\leq\;& (1+2^{-n})^2(2^{-|s \wedge t|-1}+\theta(x_{|s|},x_{|t|})) \nonumber\\
=\;& (1+2^{-n})^2\eta(s,t). \label{eqn:long}
\end{align}
This completes the case when $s \in S$. 

Hereafter assume $s \in T_r\setminus S$ and $t \in T_{r'}\setminus S$ for some $r,r' \in S$. As above, $|s|>|r|$, giving $v(s)=r$; likewise $v(t)=r'$.  First suppose $r=r'$. If $r \in I$ then
\[
 \n{K\delta(s)-K\delta(t)} = \n{Z_r(\delta(s)-\delta(t))} \leq \eta(s,t),
\]
and if $r \in S\setminus I$ then
\[
 \n{K\delta(s)-K\delta(t)} = \n{H_rG(\delta(x_{|s|})-\delta(x_{|t|}))} \leq (1+2^{-n})^2\theta(x_{|s|},x_{|t|})
 \leq (1+2^{-n})^2\eta(s,t).
\]

Now we assume $r \neq r'$. By Lemma \ref{lm:subtree_retraction}, $v(s)=r \neq r'=v(t)$ implies $s \wedge t \in S$, giving $s \wedge t=r \wedge r'$. We have three final cases:~$r,r' \in I$, $r \in I$, $r' \notin I$ (or vice-versa) and $r,r' \notin I$. First, suppose that $r,r' \in I$. Arguing as above, we have $v(z_r(s))=r$ and $v(z_{r'}(t))=r'$, and
\[
z_r(s) \wedge z_{r'}(t) = v(z_r(s)) \wedge v(z_{r'}(t)) = v(s) \wedge v(t) = s \wedge t.
\]
Hence
\begin{align*}
 \n{K\delta(s)-K\delta(t)} = \n{Z_r\delta(s)-Z_{r'}\delta(t)} &= \eta(z_r(s),z_{r'}(t))\\
 &= 2^{-|z_r(s) \wedge z_{r'}(t)|-1} + \theta(x_{|z_r(s)|},x_{|z_{r'}(t)|})\\
 &= 2^{-|s\wedge t|-1} + \theta(x_{|s|},x_{|t|}) = \eta(s,t).
\end{align*}
Second, suppose $r \in I$ and $r' \in S\setminus I$. As $s \in T_r$ and $r \in I$, we have $|s| \leq |u_r| < n$. Hence, as above, there exists $i \in F$ such that $x_{|s|}=x_i$, thus
\[
H_{r'}\delta(x_{|s|}) = H_{r'}\delta(x_i) = \delta(h_{r'}(x_i)) = \delta(u_{r'}\restrict{\pi(i)}).
\]
Moreover, $z_r(s) \in T_r$ and $|z_r(s)|=|s|>|r|$, and $h_{r'}(x_i) \in T_{r'}$ and $|h_{r'}(x_i)| = \pi(i) \geq n > |r'|$, so $z_r(s) \wedge h_{r'}(x_i) = s \wedge t$ as above. We calculate
\begin{align*}
 \eta(z_r(s),h_{r'}(x_i)) &= 2^{-|z_r(s) \wedge h_r(x_i)|-1} + \theta(x_{|z_r(s)|},x_{\pi(i)})\\
 &= 2^{-|s\wedge t|-1} + \theta(x_{|s|},x_{\pi(i)}) = 2^{-|s\wedge t|-1}.
\end{align*}
This means that we can repeat the estimate \eqref{eqn:long} above with $\delta(s)$ and $H_r$ replaced by $\delta(z_r(s))$ and $H_{r'}$, respectively, and obtain the same outcome.

Finally we consider the case $r,r' \in S\setminus I$. This is the obstacle that prompts the material above on the positive-BAP; we require $\free{(\frac{1}{2}B_\pel)^*}$ to have the positive-MAP and not just the MAP, so that $G$ is positive and so that we can apply Lemma \ref{lem:est}. Once again, for $i\in F$, we have
\[
 h_r(x_i) \wedge h_{r'}(x_i) = v(h_r(x_i)) \wedge v(h_{r'}(x_i)) = v(s) \wedge v(t) = r \wedge r' = s \wedge t.
\]
Let $A=h_r(E^*)$ and define $\map{\phi}{A}{T}$ by $\phi(h_r(x))=h_{r'}(x)$, $x \in E^*$. Given $i \in F$,
\[
 \eta(\varnothing,h_r(x_i)) = \tfrac{1}{2} + \theta(x_0,x_{\pi(i)}) \geq \tfrac{1}{2}
\]
and
\[
 \eta(h_r(x_i),\phi(h_r(x_i))) = 2^{-|h_r(x_i) \wedge h_{r'}(x_i)|}+\theta(x_{\pi(i)},x_{\pi(i)}) = 2^{-|r \wedge r'|-1}=2^{-|s \wedge t|-1}.
\]
Let $\map{J}{\free{A}}{\free{T}}$ be the unique linear map satisfying
\[
J\delta(h_r(x))=\delta(\phi(h_r(x)))=\delta(h_{r'}(x)), \quad x \in E^*.
\]
By Lemma \ref{lem:est}, 
\begin{equation}\label{eqn:est}
\n{(I-J)m} \leq 2^{-|s \wedge t|}\n{m},
\end{equation}
whenever $m \in \free{A}$ is positive. We observe that $H_{r'}=JH_r$ because
\[
 H_{r'}\delta(x) = \delta(h_{r'}(x)) = J\delta(h_r(x)) = JH_r\delta(x), \quad x \in E^*.
\]
Therefore, if we appeal to \eqref{eqn:est} with $m=H_rGW\delta(s) \in \free{A}$, which is positive because $G$ is positive, and $H_r$ and $W$ are positive by inspection, while noting also that
\[
\n{\delta(s)} = \eta(\varnothing,s) = \tfrac{1}{2}+\theta(x_0,x_{|s|}) = \tfrac{1}{2}+\n{x_{|s|}} \leq 1,
\]
we obtain
\[
 \n{(I-J)H_rGW\delta(s)} \leq 2^{-|s \wedge t|}\n{m} \leq (1+2^{-n})^2 2^{-|s \wedge t|}.
\]
Consequently,
\begin{align*}
 \n{K\delta(s) - K\delta(t)} &\leq \n{(H_r - H_{r'})GW\delta(s)} + \n{H_{r'}GW(\delta(s) - \delta(t))}\\
 &\leq \n{(I-J)H_rGW\delta(s)} + (1+2^{-n})^2\theta(x_{|s|},x_{|t|})\\
 &\leq (1+2^{-n})^2(2^{-|s \wedge t|} + \theta(x_{|s|},x_{|t|}))\\
 &\leq 2(1+2^{-n})^2\eta(s,t).
\end{align*}
Thus \eqref{eqn:inequality_we_need} is established and the proof is complete.
\end{proof}

\begin{proof}[Proof of Theorem \ref{th:pel_complete-analytic}]
We fix $M=\frac{1}{2}B_{\pel}$ with the metric $\theta$ induced by the canonical norm on $\pel$. According to \cite{godefroy:kalton:03}*{p.~139} and \cite{kaufmann:15}*{Corollary 3.3}, $\pel \cong \free{\pel} \cong \free{M}$. First assume $T\in \IF$. By Lemma \ref{lem:combine}, $M$ is isometric to a 1-Lipschitz retract of $M_{\Delta(T)}$, and thus $\pel$ is isomorphic to a complemented subspace of $\free{M_{\Delta(T)}}$. By Lemmas \ref{th:tree_BAP} and \ref{lm:tree_BAP}, $\free{M_{\Delta(T)}}$ has the BAP. This means that $\free{M_{\Delta(T)}}$ is isomorphic to a complemented subspace of $\pel$ (see e.g.~\cite{pelczynski:71}). As $\pel \cong \bigoplus_{\ell_2}\pel$ (e.g.~see the proof of \cite{albiac:kalton:16}*{Theorem 15.3.1}), by a standard application of Pe{\l}czy\'nski's decomposition method we get $\free{M_{\Delta(T)}} \cong \pel$. Now suppose $T\in \WF$. Then $M_{\Delta(T)}$ is discrete, again by Lemma \ref{lem:combine}, hence $\free{M_{\Delta(T)}}$ has the RNP \cite{aliaga:gartand:petitjean:prochazka:22}*{Theorem 4.6} whenever $T \in \WF$, and thus $\free{M_{\Delta(T)}} \not\cong \pel$. Therefore $\Delta$ witnesses the fact that $\IF$ is reducible to $\iclass{\pel}{\vs}$, meaning that the latter is $\SIG{1}{1}$-complete.
\end{proof}

We finish the paper with some open problems prompted by remarks in the Introduction. 

\begin{problem}
 Are $\iclass{\ell_1}{\vs}$ or $\iclass{L_1[0,1]}{\vs}$ $\DEL{1}{1}$ or not? More generally, does there exist $X$ such that $\iclass{X}{\vs}$ is non-empty and $\DEL{1}{1}$?
\end{problem}

Concerning the fact that the relation $\approx$ on $\vs$ is not smooth, and having in mind \cite{bossard:02}*{Proposition 6.1}, we can pose the following problem.

\begin{problem}\label{pb:E_0}
 Let $E_0$ be the equivalence relation on $2^\omega$ defined by $x E_0 y$ if and only if $x(i) \neq y(i)$ for finitely many $i\in\omega$. Does there exist a Borel map $\map{f}{2^\omega}{\vs}$ such that $x E_0 y$ if and only if $f(x) \approx f(y)$?
\end{problem}

We remark that, as $E_0$ has continuum many equivalance classes, a positive answer to Problem \ref{pb:E_0} would imply that there is a family of separable metric spaces of size continuum whose free spaces are pairwise non-isomorphic. Showing the existence of a suitable family of this size is probably a necessary first step in any positive answer to Problem \ref{pb:E_0}. Currently, all we know (in ZFC) is that there are families of this kind that are uncountable.

\begin{problem}\label{pb:c}
Is there a family of separable metric spaces of size continuum whose free spaces are pairwise non-isomorphic?
\end{problem}

\section*{Acknowledgements} I thank R.~Aliaga, M.~C\'uth, G.~Godefroy and E.~Perneck\'a, whose remarks helped me during the preparation of this paper.

\bibliography{pelczynski}

\end{document}